\DeclareMathOperator{\tr}{tr}
\DeclareMathOperator{\RE}{Re}
\theoremstyle{plain}
\newtheorem{theorem}{Theorem}
\theoremstyle{definition}
\newtheorem{definition}{Definition}
\begin{document}

\title{Invariant random fields in vector bundles and application to cosmology\thanks{This work is supported by the Swedish Institute grant SI--01424/2007.}}

\author{Anatoliy Malyarenko\thanks{Division of Applied Mathematics, School of Education, Culture and Communication,
Mälardalen University, SE 721 23 Västerås, Sweden. E-mail: \texttt{anatoliy.malyarenko@mdh.se}}}

\date{\today}

\maketitle

\begin{abstract}
We develop the theory of invariant random fields in vector bundles. The spectral decomposition of an invariant random field in a homogeneous vector bundle generated by an induced representation of a compact connected Lie group $G$ is obtained. We discuss an application to the theory of cosmic microwave background, where $G=SO(3)$. A theorem about equivalence of two different groups of assumptions in cosmological theories is proved.
\end{abstract}

\section{Introduction}

This paper is inspired by Geller and Marinucci (2008). After reading the above paper and several physical books and papers cited below, the author realised that cosmological applications require the theory of random fields in vector bundles. A variant of such a theory is developed in Section~\ref{sec:fields}, while an application to cosmology is described in Section~\ref{sec:cosmology}.

According to vast majority of modern cosmological theories, our Universe started in a ``Big Bang". This term refers to the idea that the Universe has expanded from a hot and dense initial condition at some \emph{finite} time in the past, and continues to expand now.

As the Universe expanded, both the plasma and the radiation grew cooler. When the Universe cooled enough, it became transparent. The photons that were around at that time are observable now as the \emph{relic radiation}. Their glow is strongest in the microwave region of the radio spectrum, hence another name \emph{cosmic microwave background radiation}, or just CMB.

In cosmological models, it is usually assumed that the CMB is a single realisation of a random field. A CMB detector measures an electric field $\mathbf{E}$ perpendicular to the direction of observation (or line of sight) $\mathbf{n}$. Mathematically, $\mathbf{n}$ is a point on the sphere $S^2$. The vector $\mathbf{E}(\mathbf{n})$ lies in the tangent plane, $T_{\mathbf{n}}S^2$. In other words, $\mathbf{E}(\mathbf{n})$ is a section of the tangent bundle $\xi=(TS^2,\pi,S^2)$ with
\[
\pi(\mathbf{n},\mathbf{x})=\mathbf{n},\qquad\mathbf{n}\in S^2,\quad\mathbf{x}\in T_{\mathbf{n}}S^2.
\]

It follows that cosmology uses the theory of random fields in vector bundles. A short introduction to vector bundles may be found in Geller et al (2009). It is not difficult to give a formal definition of a random field in a vector bundle. Indeed, let $\mathbb{K}$ be either the field of real numbers $\mathbb{R}$ or the field of complex numbers $\mathbb{C}$. Let $\xi=(\mathcal{E},\pi,T)$ be a finite-dimensional $\mathbb{K}$-vector bundle over a Hausdorff topological space $T$.

\begin{definition}
A \emph{vector random field} on $\xi$ is a collection of random vectors $\{\,\mathbf{X}(t)\colon t\in T\,\}$ satisfying $\mathbf{X}(t)\in\pi^{-1}(t)$, $t\in T$.
\end{definition}

In other words, a vector random field on the base $T$ of the vector bundle $\xi$ is a random section of $\xi$.

To define a second order vector random field, assume that every space $\pi^{-1}(t)$ carries an inner product.

\begin{definition}
A vector random field $\mathbf{X}(t)$ is \emph{second order} if $\mathsf{E}\|\mathbf{X}(t)\|^2_{\pi^{-1}(t)}<\infty$, $t\in T$.
\end{definition}

Next, we try do define a mean square continuous random field. The naive approach
\[
\lim_{s\to t}\mathsf{E}\|\mathbf{X}(s)-\mathbf{X}(t)\|^2=0
\]
does not work. If $s$, $t\in T$ with $s\neq t$, then $\mathbf{X}(s)$ and $\mathbf{X}(t)$ lie in different spaces. Therefore, the expression $\mathbf{X}(s)-\mathbf{X}(t)$ is not defined.

To overcome this difficulty, we extend an idea of Kolmogorov formulated by him for the case of a trivial vector bundle and  published by Rozanov (1958) and Yaglom (1961). We start Subsection~\ref{sub:definitions} by defining a scalar random field on the total space $\mathcal{E}$, which we call the field \emph{associated} to the vector random field $\mathbf{X}(t)$. Then, we call $\mathbf{X}(t)$ mean square continuous if the associated scalar random field is mean square continuous.

Let $G$ be a topological group acting continuously from the left on the base $T$. We would like to call a vector random field $\mathbf{X}(t)$ \emph{wide sense left $G$-invariant}, if the associated scalar random field is wide sense left $G$-invariant with respect to some left continuous action of $G$ on the total space $\mathcal{E}$. However, in general there exist no natural continuous left action of $G$ on $\mathcal{E}$. In Definition~\ref{def:associated}, we define an action of $G$ on $\mathcal{E}$ \emph{associated} to its action on the base space $T$. Then, we call a vector random field $\mathbf{X}(t)$ wide sense left $G$-invariant, if the associated scalar random field is wide sense left $G$-invariant with respect to the associated action.

In Subsection~\ref{sub:associated}, we consider an important example of an associated action: the so called \emph{homogeneous, or equivariant} vector bundles. They are important for us by several reasons.

On the one hand, they have a natural associated action of some topological group~$G$. Moreover, the above action identifies the vector space fibers over any two points of the base space. Therefore, all random vectors of a random field $\mathbf{X}(t)$ in a homogeneous vector bundle lie in the same space. We prove that for homogeneous vector bundles, our definitions of mean square continuous field and invariant field are equivalent to usual definitions \eqref{eq:common} and \eqref{eq:invariant}.

On the other hand, the space of the square integrable sections of a homogeneous vector bundle carries the so called \emph{induced representation} of the group~$G$. Therefore, we can use the well-developed theory of induced representations to obtain spectral decompositions of invariant random fields in homogeneous vector bundles. For an introduction to induced representations, see Barut and R\k{a}czka (1986).

In Subsection~\ref{sub:spectral} we consider mean square continuous random fields in homogeneous vector bundles over a homogeneous space $T=G/K$ of a compact connected Lie group $G$. In Theorem~\ref{th:1}, we prove the spectral decomposition of a random field in a homogeneous vector bundle of the representation of the group~$G$ induced by an irreducible representation of its subgroup~$K$. Here, we first meet the system of functions ${}_W\mathbf{Y}_{Vm}(t)$ defined by \eqref{eq:spherical}, which form the orthonormal basis in the space of the square integrable sections of a homogeneous vector bundle under consideration. The spectral decomposition in Theorems~\ref{th:1}--\ref{th:3} is given in terms of the above functions.

In Theorem~\ref{th:2}, we find the restrictions under which the spectral decomposition of Theorem~\ref{th:1} describes a wide sense $G$-invariant random field. Finally, Theorem~\ref{th:3} is a generalisation of Theorem~\ref{th:2} to the case when the representation of the group~$G$ is induced by a direct sum of finitely many irreducible representations of the subgroup~$K$.

In Section~\ref{sec:cosmology}, we apply theoretical considerations of Section~\ref{sec:fields} to cosmological models. Subsection~\ref{sub:deterministic} is a short introduction to the deterministic model of the CMB for mathematicians. In particular, we discuss different choices of local coordinates in the tangent bundle $\xi=(TS^2,\pi,S^2)$, and fix our choice. We explain both the mathematical and physical sense of the \emph{Stokes parameters} $I$, $Q$, $U$, and $V$. The material of this Subsection is based on Cabella and Kamionkowski (2005), Challinor (2004), Challinor (2009), Challinor and Peiris (2009), Durrer (2008), and Lin and Wandelt (2006).

The probabilistic model of the CMB is introduced in Subsection~\ref{sub:probabilistic}. We define the set of vector bundles $\xi_s=(\mathcal{E}_s,\pi,S^2)$, $s\in\mathbb{Z}$, where the representation of the rotation group $G=SO(3)$ induced by the representation $W(g_{\alpha})=e^{\mathrm{i}s\alpha}$ of the subgroup $K=SO(2)$ is realised. In particular, the absolute temperature of the $CMB$, $T(\mathbf{n})$, is a single realisation of a mean square continuous strict sense isotropic (i.e., $SO(3)$-invariant) random field in $\xi_0$, while the complex polarisation, $(Q\pm\mathrm{i}U)(\mathbf{n})$, is a single realisation of a mean square continuous strict sense isotropic random field in $\xi_{\pm 2}$. Because any second order strict sense isotropic random field is automatically wide sense isotropic, Theorem~\ref{th:2} immediately gives the spectral decomposition of the above random fields. In the case of the absolute temperature, the functions \eqref{eq:spherical} become familiar \emph{spherical harmonics}, $Y_{\ell m}$, while in the case of the complex polarisation they become \emph{spin-weighted spherical harmonics}, ${}_{\pm 2}Y_{\ell m}$. This fact explains our notation, ${}_W\mathbf{Y}_{Vm}(t)$. The expansion coefficients are uncorrelated random variables with finite variance, which does not depend on the index $m$. In physical terms, the variance as a function of the parameter $\ell$ is the \emph{power spectrum}.

While studying physical literature, we have found that there exist various definitions of both ordinary and spin-weighted spherical harmonics. The choice of a definition is called the \emph{phase convention}. In terms of the representation theory, the phase convention is the choice of a basis in the space of the group representation. We made an attempt to describe different phase conventions in order to help the mathematicians to read physical literature. We also describe different notations for power spectra.

Following Zaldarriaga and Seljak (1997), we construct the random fields $E(\mathbf{n})$ and $B(\mathbf{n})$. The advantage of this fields over the complex polarisation fields $(Q\pm\mathrm{i}U)(\mathbf{n})$ is that the former fields are scalar (i.e., live in $\xi_0$), real-valued, and isotropic. Moreover, only $T(\mathbf{n})$ and $E(\mathbf{n})$ may be correlated, while two remaining pairs are always uncorrelated.

Our new result is Theorem~\ref{th:4}. It states that the standard assumptions of cosmological theories (the random fields $T(\mathbf{n})$, $E(\mathbf{n})$, and $B(\mathbf{n})$ are jointly isotropic) is equivalent to the assumption that $((Q-iU)(\mathbf{n}),T(\mathbf{n}),(Q+iU)(\mathbf{n}))$ is an isotropic random field in $\xi_{-2}\oplus\xi_0\oplus\xi_2$.

We conclude by two short remarks concerning Gaussian cosmological theories and an alternative description of the CMB in terms of the so called \emph{tensor spherical harmonics}.

Note that we do not consider questions connected with statistical analysis of the observation data of the recent and forthcoming experiments. For an introduction to this field of research, see Geller et al (2009) and the references herein.

I am grateful to Professor Domenico Marinucci for useful discussions on cosmology.

\section{Random fields in vector bundles}\label{sec:fields}

\subsection{Definitions}\label{sub:definitions}

Let $(\Omega,\mathfrak{F},\mathsf{P})$ be a probability space and let $\mathbf{X}(t)=\mathbf{X}(t,\omega)$ be a vector random field in a finite-dimensional $\mathbb{K}$-vector bundle $\xi=(\mathcal{E},\pi,T)$.

\begin{definition}
Let $X(t,\mathbf{x})$ be the scalar random field on the total space $\mathcal{E}$, defined as
\[
X(t,\mathbf{x})=(\mathbf{x},\mathbf{X}(t))_{\pi^{-1}(t)},
\qquad t\in T,\quad\mathbf{x}\in\pi^{-1}(t).
\]
We call $X(t,\mathbf{x})$ the scalar random field \emph{associated} to the vector random field $\mathbf{X}(t)$.
\end{definition}

The field $X(t,\mathbf{x})$ has the following property: its restriction onto $\pi^{-1}(t)$ is linear, i.e., for any $\mathbf{x}$, $\mathbf{y}\in\pi^{-1}(t)$, and for any $\alpha$, $\beta\in\mathbb{K}$,

\begin{equation}\label{eq:linear}
X(t,\alpha\mathbf{x}+\beta\mathbf{y})=\alpha X(t,\mathbf{x})+\beta X(t,\mathbf{y})\quad\mathsf{P}\text{-a.s.}
\end{equation}

\begin{definition}
A vector random field $\mathbf{X}(t)$ is \emph{mean square continuous} if the associated scalar random field $X(t,\mathbf{x})$ is mean square continuous, i.e., if the map
\[
\mathcal{E}\to L^2_{\mathbb{K}}(\Omega,\mathfrak{F},\mathsf{P}),\qquad (t,\mathbf{x})\mapsto X(t,\mathbf{x})
\]
is continuous.
\end{definition}

Let $H$ be a finite-dimensional $\mathbb{K}$-vector space with an inner product $(\boldsymbol{\cdot},\boldsymbol{\cdot})$. For any $\mathbf{x}\in H$, let $\mathbf{x}^*$ be the unique element of the conjugate space $H^*$ satisfying
\[
\mathbf{x}^*(\mathbf{y})=(\mathbf{y},\mathbf{x}),\qquad\mathbf{y}\in H.
\]

The mean value
\[
\mathbf{M}(t)=\mathsf{E}[\mathbf{X}(t)]
\]
of the mean square continuous random field $\mathbf{X}(t)$ is the continuous section of the vector bundle $\xi$, while its covariance operator
\[
R(s,t)=\mathsf{E}[\mathbf{X}(s)\otimes\mathbf{X}(t)^*]
\]
is the continuous section of the vector bundle $\xi\otimes\xi^*$.

Because the scalar random field $X(\mathbf{x})$ has property \eqref{eq:linear}, it can be left invariant with respect to the associated action, only if the restriction of the associated action onto any fiber $\pi^{-1}(t)$ is a linear invertible operator acting between the fibers. Moreover, the associated action must map the fiber $\pi^{-1}(t)$ onto the fiber $\pi^{-1}(gt)$.

\begin{definition}\label{def:associated}
Let $\xi=(\mathcal{E},\pi,T)$ be a vector bundle, and let $G\times T\to T$ be a continuous left action of a topological group $G$ on the base space $T$. A continuous left action $G\times\mathcal{E}\to\mathcal{E}$ of $G$ on the total space $\mathcal{E}$ is called \emph{associated} with the action $G\times T\to T$, if its restriction on any fiber $\pi^{-1}(t)$ is an invertible linear operator acting from $\pi^{-1}(t)$ to $\pi^{-1}(gt)$.
\end{definition}

We are ready to formulate the main definitions of Subsection~\ref{sub:definitions}.

\begin{definition}\label{def:wide}
Let $\xi=(\mathcal{E},\pi,T)$ be a vector bundle, let $G\times T\to T$ be a continuous left action of a topological group $G$ on the base space $T$, and let $G\times\mathcal{E}\to\mathcal{E}$ be an associated action of $G$ on the total space $\mathcal{E}$. A vector random field $\mathbf{X}(t)$ on $\xi$ is called \emph{wide sense left $G$-invariant} if the associated scalar random field $X(t,\mathbf{x})$ is wide sense left invariant with respect to the associated action $G\times\mathcal{E}\to\mathcal{E}$, i.e., for all $g\in G$, for all $s$, $t\in T$, and for all $\mathbf{x}\in\pi^{-1}(s)$, $\mathbf{y}\in\pi^{-1}(t)$ we have
\[
\begin{aligned}
\mathsf{E}[X(gs,g\mathbf{x})]&=\mathsf{E}[X(s,\mathbf{x})],\\
\mathsf{E}[X(gs,g\mathbf{x})\overline{X(gt,g\mathbf{y})}]&=
\mathsf{E}[X(s,\mathbf{x})\overline{X(t,\mathbf{y})}].
\end{aligned}
\]
\end{definition}

\begin{definition}
Under conditions of Definition~\ref{def:wide}, a vector random field $\mathbf{X}(t)$ on $\xi$ is called \emph{strict sense left $G$-invariant} if the associated scalar random field $X(t,\mathbf{x})$ is strict sense left invariant with respect to the associated action $G\times\mathcal{E}\to\mathcal{E}$, i.e., all finite-dimensional distributions of the random field $X(t,\mathbf{x})$ are invariant under the associated action.
\end{definition}

It is easy to see that any mean square continuous strict sense invariant random field is wide sense invariant. On the other hand, any Gaussian wide sense invariant random field is strict sense invariant.

\subsection{An example of associated action}\label{sub:associated}

Let $G$ be a topological group, and let $K$ be its closed subgroup. Let $T$ be the homogeneous space $G/K$ of \emph{left} cosets $g_0K$, $g_0\in G$. An element $g\in G$ acts on $T$ by left multiplication:

\begin{equation}\label{eq:action}
g_0K\mapsto gg_0K.
\end{equation}

Let $W$ be a representation of $K$ on a finite-dimensional complex Hilbert space $H$. Consider the following action of $K$ on the Cartesian product $G\times H$:
\[
k(g,\mathbf{x})=(gk,W(k^{-1})\mathbf{x}).
\]
Denote the quotient space of orbits of the above action by $\mathcal{E}_W$. The projection
\[
\pi\colon \mathcal{E}_W\to T,\qquad\pi(g,\mathbf{x})=gK
\]
determines the \emph{homogeneous, or equivariant} vector bundle $\xi=(\mathcal{E}_W,\pi,T)$.

Let $t=g_0K\in T$. It is trivial to check that the action
\[
g(g_0K,\mathbf{x})=(gg_0K,\mathbf{x})
\]
is associated to the action \eqref{eq:action}.

Moreover, let $\mathbf{X}(t)$ be a random field in $\xi$. All random vectors $\mathbf{X}(t)$ lie in the same space $H$. By definition, the associated scalar random field $X(t,\mathbf{x})=(\mathbf{x},\mathbf{X}(t))$ is mean square continuous if and only if
\[
\lim_{(s,\mathbf{y})\to(t,\mathbf{x})}\mathsf{E}|X(s,\mathbf{y})
-X(t,\mathbf{x})|^2=0.
\]
Let $\{\mathbf{e}_1,\mathbf{e}_2,\dots,\mathbf{e}_{\dim H}\}$ be a basis in $H$. Put $\mathbf{y}=\mathbf{x}=\mathbf{e}_j$. Then we have
\[
\lim_{s\to t}\mathsf{E}|\overline{X_j(s)}-\overline{X_j(t)}|^2=0,
\]
which is equivalent to
\[
\lim_{s\to t}\mathsf{E}|X_j(s)-X_j(t)|^2=0.
\]
It follows that
\[
\begin{aligned}
\lim_{s\to t}\mathsf{E}\|\mathbf{X}(s)-\mathbf{X}(t)\|^2&=
\lim_{s\to t}\mathsf{E}\sum^{\dim H}_{j=1}|X_j(s)-X_j(t)|^2\\
&=\sum^{\dim H}_{j=1}\lim_{s\to t}\mathsf{E}|X_j(s)-X_j(t)|^2=0.
\end{aligned}
\]

Conversely, let
\begin{equation}\label{eq:common}
\lim_{s\to t}\mathsf{E}\|\mathbf{X}(s)-\mathbf{X}(t)\|^2=0.
\end{equation}
Then, for any $j=1$, $2$, \dots, $\dim H$,
\[
\begin{aligned}
0&\leq\limsup_{s\to t}\mathsf{E}|X_j(s)-X_j(t)|^2\\
&\leq\sum^{\dim H}_{j=1}\limsup_{s\to t}\mathsf{E}|X_j(s)-X_j(t)|^2=0,
\end{aligned}
\]
thus, $\lim_{s\to t}\mathsf{E}|X_j(s)-X_j(t)|^2=0$. It follows that
\[
\begin{aligned}
\lim_{(s,\mathbf{y})\to(t,\mathbf{x})}\mathsf{E}|X(s,\mathbf{y})
-X(t,\mathbf{x})|^2&=\lim_{(s,\mathbf{y})\to(t,\mathbf{x})}\mathsf{E}
\left|\sum^{\dim H}_{j=1}(y_j\overline{X_j(s)}-x_j\overline{X_j(t)})\right|^2\\
&\leq 2\sum^{\dim H}_{j=1}\lim_{(s,\mathbf{y})\to(t,\mathbf{x})}\mathsf{E}
|y_jX_j(s)-x_jX_j(t)|^2=0.
\end{aligned}
\]

We proved that in the particular case of a vector random field in a homogeneous vector bundle our definition of mean square continuity is equivalent to the usual definition \eqref{eq:common}. In the same way one can easily prove that our definition of a wide sense $G$-invariant field is equivalent to the following equalities: for all $s$, $t\in T$, and for all $g\in G$ we have
\begin{equation}\label{eq:invariant}
\begin{aligned}
\mathsf{E}[\mathbf{X}(gs)]&=\mathsf{E}[\mathbf{X}(s)],\\
\mathsf{E}[\mathbf{X}(gs)\otimes\mathbf{X}^*(gt)]
&=\mathsf{E}[\mathbf{X}(s)\otimes\mathbf{X}^*(t)].
\end{aligned}
\end{equation}
The first equation is equivalent to the following equality
\[
\mathsf{E}[\mathbf{X}(s)]=\mathsf{E}[\mathbf{X}(t)],\qquad s,t\in T,
\]
because $G$ acts transitively on $T$. Thus, the mean value of a wide sense $G$-invariant random field on $T$ is constant.

\subsection{The spectral decomposition of a vector random field over a compact homogeneous space}\label{sub:spectral}

Let $G$ be a compact topological group, and let $K$ be its closed subgroup. Let $T$ be the homogeneous space $G/K$. Let $W$ be a representation of $K$ on a finite-dimensional complex Hilbert space $H$, and let $\xi=(\mathcal{E}_W,\pi,T)$ be the corresponding homogeneous vector bundle. Let $\hat{G}$ (resp. $\hat{K}$) be the set of all equivalence classes of irreducible unitary representations of $G$ (resp. $K$). For simplicity, assume that $K$ is \emph{massive} in $G$ (Vilenkin, 1968). This means that for all $V\in\hat{G}$ and for all $W\in\hat{K}$ the multiplicity of $W$ in the restriction of $V$ onto $K$ is either $0$ or $1$.

First, consider the case when $W$ is an \emph{irreducible} unitary representation of $K$. Let $dg$ be the Haar measure on $G$ with $\int_G\,dg=1$. Let $L^2(G,H)$ be the set of all measurable functions $\mathbf{f}\colon G\to H$ such that
\[
\int_G\|\mathbf{f}(g)\|^2\,\mathrm{d}g<\infty
\]
and
\begin{equation}\label{eq:localrotation}
\mathbf{f}(gk)=W(k^{-1})\mathbf{f}(g),\qquad g\in G,\quad k\in K.
\end{equation}
To each $\mathbf{f}\in L^2(G,H)$, we associate the map $\mathbf{s}\colon T\to \mathcal{E}_W$: $\mathbf{s}(gK)=(g,\mathbf{f}(g))$. The above association is an isomorphism between $L^2(G,H)$ and the space $L^2(\mathcal{E}_W)$ of the square integrable sections of the homogeneous vector bundle $\xi$. This space can be considered as a space of ``twisted" functions on the base space $T$. If $W$ is the trivial representation of $K$ in $H=\mathbb{C}$, then we return back to the standard space $L^2(G)$. The representation
\[
[\mathcal{U}(g)\mathbf{s}](t)=\mathbf{s}(g^{-1}t)
\]
is the representation of $G$ induced from the representation~$W$ of the subgroup~$K$.

We need the following facts about induced representations.

\begin{enumerate}

\item Frobenius reciprocity: the multiplicity of $V\in\hat{G}$ in $\mathcal{U}$ is equal to the multiplicity of $W$ in $V$.

\item The representation induced from the direct sum $W_1\oplus W_2\oplus\cdots\oplus W_N$ is the direct sum of representations induced from $W_1$, $W_2$, \dots, $W_N$.

\end{enumerate}

Let $\hat{G}_K(W)$ be the set of all $V\in\hat{G}$ whose restrictions onto $K$ contain $W$ (necessarily once, because $K$ is massive in $G$). For any $V\in\hat{G}_K(W)$, let $i_V$ be the embedding of $H$ into the space $H_V$ of the representation $V$. Let $p_V$ be the orthogonal projection from $H_V$ onto $H$. By the result of Camporesi (2005), any $\mathbf{f}\in L^2(G,H)$ can be represented by the series
\[
\mathbf{f}(g)=\frac{1}{\dim W}\sum_{V\in\hat{G}_K(W)}\dim V\int_Gp_VV(g^{-1}h)i_V\mathbf{f}(h)\,\mathrm{d}h.
\]
The above series converges in strong topology of the Hilbert space $L^2(G,H)$, i.e.,
\[
\|\mathbf{f}\|^2_{L^2(G,H)}=\frac{1}{\dim W}\sum_{V\in\hat{G}_K(W)}\dim V\int_G\left(\int_Gp_VV(g^{-1}h)i_V\mathbf{f}(h)\,\mathrm{d}h,\mathbf{f}(g)\right)\,\mathrm{d}g.
\]

Fix a basis $\{\mathbf{e}_1,\mathbf{e}_2,\dots,\mathbf{e}_{\dim H}\}$ of the space $H$. Let $\{\mathbf{e}_1^{(V)},\mathbf{e}_2^{(V)},\dots,\mathbf{e}_{\dim H_V}^{(V)}\}$ be a basis in $H_V$ with
\begin{equation}\label{eq:embedding}
i_V\mathbf{e}_j=\mathbf{e}_{p+j}^{(V)}\qquad 1\leq j\leq\dim W.
\end{equation}
for some $p\geq 0$. Let $f_j(g)=(\mathbf{f}(g),\mathbf{e}_j)$ be the coordinates of $\mathbf{f}(g)$. Equation~\eqref{eq:embedding} means that $W$ acts in the linear span of the $\dim W$ basis vectors of $H_V$ that are enumerated without lacunas. Then we have
\[
i_V\mathbf{f}(h)=(0,\dots,0,f_1(h),\dots,f_{\dim W}(h),0,\dots,0).
\]
Let $V_{m,n}(g)=(V(g)\mathbf{e}_m^{(V)},\mathbf{e}_n^{(V)})$ be the matrix elements of the representation $V$. Then
\[
(V(g^{-1}h)i_V\mathbf{f}(h))_{p+j}=\sum^{\dim V}_{m=1}\overline{V_{m,p+j}(g)}\sum^{\dim W}_{n=1}V_{m,p+n}(h)f_n(h)
\]
and
\[
f_j(g)=\frac{1}{\dim W}\sum_{V\in\hat{G}_K(W)}\dim V\sum^{\dim V}_{m=1}\sum^{\dim W}_{n=1}\int_Gf_n(h)V_{m,p+n}(h)\,\mathrm{d}h\overline{V_{m,p+j}(g)}.
\]

From now, let $G$ be a connected compact Lie group, and let $p\colon G\to T$ denote a natural projection: $p(g)=gK$. Let $D_G$ be an open dense subset in $G$, and let $(D_G,\mathbf{J}(g))$ with
\[
\mathbf{J}(g)=(\theta_1(g),\dots,\theta_{\dim G}(g))\colon D_G\to\mathbb{R}^{\dim G}
\]
be a chart of the atlas of the manifold $G$ with the following property: if $k\in K$ and both $g$ and $kg$ lie in $D_G$, then $\theta_j(kg)=\theta_j(g)$ for $1\leq j\leq\dim T$. Then,  $(D_T,\mathbf{I}(t))$ with
\begin{equation}\label{eq:map}
\begin{aligned}
D_T&=pD_G,\\
\mathbf{I}(t)&=(\theta_1(t),\dots,\theta_{\dim T}(t))\colon D_T\to\mathbb{R}^{\dim T}
\end{aligned}
\end{equation}
is a chart of the atlas of the manifold $T$, and the domain $D_T$ of this chart is dense in $T$. Let $t\in D_T$ has local coordinates $(\theta_1,\dots,\theta_{\dim T})$ in the chart \eqref{eq:map}. Then, the representation of the section $\mathbf{s}\in L^2(\mathcal{E}_W)$ associated to $\mathbf{f}\in L^2(G,H)$ has the form
\begin{equation}\label{eq:section}
s_j(t)=\frac{1}{\dim W}\sum_{V\in\hat{G}_K(W)}\dim V\sum^{\dim V}_{m=1}\sum^{\dim W}_{n=1}\int_Ts_n(t)V_{m,p+n}(t)\,\mathrm{d}t\overline{V_{m,p+j}(t)}
\end{equation}
where $dt$ is the $G$-invariant measure on $T$ with $\int_T\,\mathrm{d}t=1$, and
\[
V_{m,p+n}(t)=V_{m,p+n}(\theta_1,\dots,\theta_{\dim T},\theta_{\dim T+1}^{(0)},\dots,\theta_{\dim G}^{(0)}).
\]

Introduce the following notation:
\begin{equation}\label{eq:spherical}
{}_W\mathbf{Y}_{Vm}(t)=\sqrt{\frac{\dim V}{\dim W}}(\overline{V_{m,p+1}(t)},\overline{V_{m,p+2}(t)},\dots,\overline{V_{m,p+\dim W}(t)}).
\end{equation}
Note that the correct notation must be ${}_W\mathbf{Y}_{\mathbf{I}Vm}(t)$, because functions \eqref{eq:spherical} depend on the choice of a chart. In what follows, we use only chart \eqref{eq:map} and suppress symbol $\mathbf{I}$ for notational simplicity.

Equation~\eqref{eq:section} means that the functions $\{\,{}_W\mathbf{Y}_{Vm}(t)\colon V\in\hat{G}_K(W),1\leq m\leq\dim V\,\}$ form a basis in $L^2(\mathcal{E}_W)$, i.e.,
\begin{equation}\label{eq:deterministic}
s_j(t)=\sum_{V\in\hat{G}_K(W)}\sum^{\dim V}_{m=1}\sum^{\dim W}_{n=1}\int_Ts_n(t)\overline{({}_WY_{Vm})_n(t)}\,\mathrm{d}t({}_WY_{Vm})_j(t)
\end{equation}

Let $\mathbf{X}(t)$ be a mean square continuous random field in $\xi$. Consider the following random variables:
\begin{equation}\label{eq:Z}
Z_{mn}^{(V)}=\int_TX_n(t)\overline{({}_WY_{Vm})_n(t)}\,\mathrm{d}t,
\end{equation}
where $V\in\hat{G}_K(W)$, $1\leq m\leq\dim V$, and $1\leq n\leq\dim W$. This integral has to be understood as a Bochner integral of a function taking values in the space $L^2_{\mathbb{K}}(\Omega,\mathfrak{F},\mathsf{P})$.

\begin{theorem}\label{th:1}
Let $G$ be a connected compact Lie group, let $K$ be its massive subgroup, let $W$ be an irreducible unitary representation of the group~$K$, and let $\xi$ be the corresponding homogeneous vector bundle. In the chart \eqref{eq:map}, a mean square continuous random field $\mathbf{X}(t)$ in $\xi$ has the form
\begin{equation}\label{eq:spectral}
X_j(t)=\sum_{V\in\hat{G}_K(W)}\sum^{\dim V}_{m=1}\sum^{\dim W}_{n=1}Z_{mn}^{(V)}({}_WY_{Vm})_j(t),
\end{equation}
where random variables $Z_{mn}^{(V)}$ have the form \eqref{eq:Z}.
\end{theorem}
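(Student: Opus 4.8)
The plan is to regard the random field as a single vector in a Hilbert tensor product and then apply the deterministic orthonormal expansion \eqref{eq:deterministic} coefficient by coefficient; the probabilistic content lies entirely in the passage from scalar sections to $L^2_{\mathbb{K}}(\Omega)$-valued sections, the group-theoretic work having already been done in the construction of the basis \eqref{eq:spherical}.

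First I would record the two structural consequences of the hypotheses. Since $G$ is compact and $K$ is closed, the base $T=G/K$ is a compact manifold; by the equivalence established in Subsection~\ref{sub:associated}, mean square continuity means that $t\mapsto\mathbf{X}(t)$ is a continuous map from the compact space $T$ into the Hilbert space $L^2_{\mathbb{K}}(\Omega,\mathfrak{F},\mathsf{P})\otimes H$. A continuous map on a compact space into a Banach space is bounded and Bochner integrable, whence $\sup_{t\in T}\mathsf{E}\|\mathbf{X}(t)\|^2<\infty$ and, in particular, $\int_T\mathsf{E}\|\mathbf{X}(t)\|^2\,\mathrm{d}t<\infty$. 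This already guarantees that each integrand $t\mapsto X_n(t)\overline{({}_WY_{Vm})_n(t)}$ in \eqref{eq:Z} is a continuous $L^2_{\mathbb{K}}(\Omega)$-valued function on $T$, so that the Bochner integrals $Z_{mn}^{(V)}$ are well-defined elements of $L^2_{\mathbb{K}}(\Omega)$.

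Next I would identify $\mathbf{X}$ with an element of the Hilbert space $\mathcal{H}=L^2(\mathcal{E}_W)\otimes L^2_{\mathbb{K}}(\Omega)$. The coordinates $X_j$ satisfy the twisting relation \eqref{eq:localrotation} pathwise, so the integrability bound above places $\mathbf{X}$ in $\mathcal{H}$. By \eqref{eq:deterministic} the system $\{\,{}_W\mathbf{Y}_{Vm}\colon V\in\hat{G}_K(W),\,1\leq m\leq\dim V\,\}$ is an orthonormal basis of the first factor $L^2(\mathcal{E}_W)$ (a countable one, since $\hat{G}$ is countable for compact $G$); tensoring it with any orthonormal basis of $L^2_{\mathbb{K}}(\Omega)$ yields an orthonormal basis of $\mathcal{H}$. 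Expanding $\mathbf{X}$ in this basis and collecting the terms sharing a fixed ${}_W\mathbf{Y}_{Vm}$ gives the $L^2_{\mathbb{K}}(\Omega)$-valued Fourier coefficient $\langle\mathbf{X},{}_W\mathbf{Y}_{Vm}\rangle_{L^2(\mathcal{E}_W)}=\sum_{n=1}^{\dim W}Z_{mn}^{(V)}$, and Parseval's identity then delivers \eqref{eq:spectral} with convergence in $\mathcal{H}$, i.e.\ in the mean square sense on $T\times\Omega$. Equivalently, one may apply \eqref{eq:deterministic} directly with $s_j$ replaced by $X_j$ and justify the interchange of the Bochner integral with the convergent series.

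The step I expect to be the main obstacle is the rigorous passage from \eqref{eq:deterministic}, valid for each fixed element of $L^2(\mathcal{E}_W)$, to its $L^2_{\mathbb{K}}(\Omega)$-valued analogue: one must check that $\mathbf{X}$ genuinely lies in the completed tensor product $\mathcal{H}$, so that Bochner integrability and Parseval apply simultaneously to all coordinates, and that the summation over $V$, $m$, $n$ may be exchanged with Bochner integration. A secondary delicate point is the mode of convergence. The argument yields convergence of \eqref{eq:spectral} in $\mathcal{H}=L^2(T\times\Omega)$, and for a field that is merely mean square continuous, and not here assumed invariant, one should not expect more than this without an additional Mercer-type input; accordingly I would state the equality \eqref{eq:spectral} in the mean square sense rather than pointwise in $t$.
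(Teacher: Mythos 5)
Your argument is correct, but it reaches \eqref{eq:spectral} by a genuinely different route than the paper. The paper proceeds by moment matching: writing $Z_j(t)$ for the right-hand side of \eqref{eq:spectral}, it computes $\mathsf{E}[Z^{(V)}_{mn}]$ and $\mathsf{E}[Z^{(V)}_{mn}\overline{Z^{(V')}_{m'n'}}]$ from the Bochner integrals \eqref{eq:Z} and then verifies, by applying the deterministic expansion \eqref{eq:deterministic} to the continuous sections $\mathbf{M}(t)$ and $R(t_1,t_2)$, that $\mathbf{Z}(t)$ has the same mean and covariance operator as $\mathbf{X}(t)$ (displays \eqref{eq:mean} and \eqref{eq:variance}). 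You instead invoke Parseval once, in the Hilbert space $L^2(\mathcal{E}_W)\otimes L^2_{\mathbb{K}}(\Omega)$, after noting that mean square continuity over the compact base places $\mathbf{X}$ in that space, and you read off the coefficient of ${}_W\mathbf{Y}_{Vm}$ as $\sum_n Z^{(V)}_{mn}$. Your route is shorter, pins down the mode of convergence honestly ($L^2(T\times\Omega)$, which is all either argument yields without the invariance used later in Theorem~\ref{th:2} to obtain uniform convergence), and is logically self-contained, whereas the paper's stated target --- equality of the first two moments of $\mathbf{Z}$ with $\mathbf{M}$ and $R$ --- does not by itself force $\mathbf{Z}=\mathbf{X}$; one also needs the cross-moment $\mathsf{E}[X_j(t_1)\overline{Z_{j'}(t_2)}]=R_{jj'}(t_1,t_2)$, which the same computation supplies but which the paper does not write out. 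What the paper's route buys in exchange is the explicit formulas \eqref{eq:mean} and \eqref{eq:variance} for the moments of the coefficients $Z^{(V)}_{mn}$, which are quoted verbatim at the start of the proof of Theorem~\ref{th:2}; with your argument those formulas would still have to be derived there.
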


\begin{proof}
Let $\mathbf{M}(t)$ be the mean value of the random field $\mathbf{X}(t)$, and let $R(t_1,t_2)$ be its covariance operator. Denote the right hand side of \eqref{eq:spectral} by $Z_j(t)$. We need to prove that
\[
\mathsf{E}[\mathbf{Z}(t)]=\mathbf{M}(t)
\]
and
\[
\mathsf{E}[\mathbf{Z}(t_1)\otimes\mathbf{Z}^*(t_2)]=R(t_1,t_2).
\]

Using \eqref{eq:Z}, we obtain
\[
\mathsf{E}[Z_{mn}^{(V)}]=\int_T\mathsf{E}[X_n(t)]\overline{({}_WY_{Vm})_n(t)}\,\mathrm{d}t.
\]
It follows that
\begin{equation}\label{eq:mean}
\begin{aligned}
\mathsf{E}[Z_j(t)]&=\mathsf{E}\left[\sum_{V\in\hat{G}_K(W)}\sum^{\dim V}_{m=1}\sum^{\dim W}_{n=1}Z_{mn}^{(V)}({}_WY_{Vm})_j(t)\right]\\
&=\sum_{V\in\hat{G}_K(W)}\sum^{\dim V}_{m=1}\sum^{\dim W}_{n=1}\mathsf{E}[Z_{mn}^{(V)}]({}_WY_{Vm})_j(t)\\
&=\sum_{V\in\hat{G}_K(W)}\sum^{\dim V}_{m=1}\sum^{\dim W}_{n=1}\int_T\mathsf{E}[X_n(t)]\overline{({}_WY_{Vm})_n(t)}\,\mathrm{d}t({}_WY_{Vm})_j(t)\\
&=\mathsf{E}[X_j(t)]
\end{aligned}
\end{equation}
by \eqref{eq:deterministic}.

Similarly,
\[
\mathsf{E}[Z_{mn}^{(V)}\overline{Z_{m'n'}^{(V')}}]=\iint_{T\times T}
R(t_1,t_2)\overline{({}_WY_{Vm})_n(t_1)}({}_WY_{V'm'})_{n'}(t_2)\,\mathrm{d}t_1\,\mathrm{d}t_2.
\]
It follows that
\begin{equation}\label{eq:variance}
\begin{aligned}
\mathsf{E}[Z_j(t_1)\overline{Z_{j'}(t_2)}]&=\sum_{V,V'\in\hat{G}_K(W)}\sum^{\dim V}_{m=1}\sum^{\dim V'}_{m'=1}\sum^{\dim W}_{n,n'=1}\mathsf{E}[Z_{mn}^{(V)}\overline{Z_{m'n'}^{(V')}}]({}_WY_{Vm})_j(t_1)
\overline{({}_WY_{V'm'})_{j'}(t_2)}\\
&=\sum_{V,V'\in\hat{G}_K(W)}\sum^{\dim V}_{m=1}\sum^{\dim V'}_{m'=1}\sum^{\dim W}_{n,n'=1}\iint_{T\times T}R(t_1,t_2)\overline{({}_WY_{Vm})_n(t_1)}\\
&\quad\times({}_WY_{V'm'})_{n'}(t_2)\,\mathrm{d}t_1\,\mathrm{d}t_2({}_WY_{Vm})_j(t_1)\overline{({}_WY_{V'm'})_{j'}(t_2)}\\
&=R_{jj'}(t_1,t_2).
\end{aligned}
\end{equation}
\end{proof}

Denote by $V_0$ the trivial irreducible representation of the group $G$.

\begin{theorem}\label{th:2}
Under conditions of Theorem~\ref{th:1}, the following statements are equivalent.
\begin{enumerate}

\item $\mathbf{X}(t)$ is a mean square continuous wide sense invariant random field in $\xi$.

\item $\mathbf{X}(t)$ has the form \eqref{eq:spectral}, where $Z_{mn}^{(V)}$, $V\in\hat{G}_K(W)$, $1\leq m\leq\dim V$, $1\leq n\leq\dim W$ are random variables satisfying the following conditions.

    \begin{itemize}

    \item If $V\neq V_0$, then $\mathsf{E}[Z_{mn}^{(V)}]=0$ .

    \item $\mathsf{E}[Z_{mn}^{(V)}\overline{Z_{m'n'}^{(V')}}]=\delta_{VV'}
        \delta_{mm'}R^{(V)}_{nn'}$, with
        \begin{equation}\label{eq:convergent}
        \sum_{V\in\hat{G}_K(W)}\dim V\tr[R^{(V)}]<\infty.
        \end{equation}

    \end{itemize}

\end{enumerate}
\end{theorem}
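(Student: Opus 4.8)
The plan is to prove the equivalence by combining the spectral representation of Theorem~\ref{th:1} with the way the basis functions ${}_W\mathbf{Y}_{Vm}$ transform under the induced representation $\mathcal{U}$. The crucial point is that, for each fixed $V\in\hat{G}_K(W)$, the $\dim V$ functions $\{{}_W\mathbf{Y}_{Vm}\}_{m=1}^{\dim V}$ span the copy of $V$ sitting inside $L^2(\mathcal{E}_W)$, which is unique because $K$ is massive. From the definition \eqref{eq:spherical} and the multiplication law for the matrix elements $V_{mn}$ one obtains $\mathcal{U}(g)\,{}_W\mathbf{Y}_{Vm}=\sum_{m'=1}^{\dim V}V_{m'm}(g)\,{}_W\mathbf{Y}_{Vm'}$. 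Pairing $\mathbf{X}$ against these functions as in \eqref{eq:Z}, and using that by Theorem~\ref{th:1} the coefficients are uniquely determined by the field, I would deduce the transformation law $Z_{mn}^{(V)}\mapsto\sum_{m'}V_{mm'}(g)Z_{m'n}^{(V)}$ when $\mathbf{X}$ is replaced by $\mathcal{U}(g)\mathbf{X}$; the fibre index $n$ is untouched.

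For the implication $(1)\Rightarrow(2)$, recall that wide sense invariance \eqref{eq:invariant} says precisely that the mean and all second moments are unchanged under $\mathbf{X}\mapsto\mathcal{U}(g)\mathbf{X}$. Feeding the transformation law into the mean gives $\sum_{m'}V_{mm'}(g)\,\mathsf{E}[Z_{m'n}^{(V)}]=\mathsf{E}[Z_{mn}^{(V)}]$ for every $g$, so the vector $(\mathsf{E}[Z_{m'n}^{(V)}])_{m'}$ is a fixed vector of the irreducible representation $V$; hence it vanishes unless $V=V_0$, which is the first bullet. Feeding it into the second moment gives, for the matrix $A_{mm'}=\mathsf{E}[Z_{mn}^{(V)}\overline{Z_{m'n'}^{(V')}}]$ with $n,n'$ held fixed, an intertwining relation between $V$ and $V'$ valid for all $g$. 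Schur's lemma then forces $A=0$ when $V\not\cong V'$ and $A=\delta_{mm'}R^{(V)}_{nn'}$ when $V=V'$, which is the second bullet; massiveness is exactly what guarantees multiplicity one, so no multiplicity space survives and Schur applies unambiguously. Finally, Parseval for the orthonormal system \eqref{eq:spherical} yields $\int_T\mathsf{E}\|\mathbf{X}(t)\|^2\,\mathrm{d}t=\sum_{V,m,n}\mathsf{E}|Z_{mn}^{(V)}|^2=\sum_{V}\dim V\,\tr[R^{(V)}]$, and since invariance makes $\mathsf{E}\|\mathbf{X}(t)\|^2$ constant and finite, the left-hand side is finite, establishing \eqref{eq:convergent}.

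For the converse $(2)\Rightarrow(1)$, I would start from coefficients satisfying the two bullet conditions and show that \eqref{eq:convergent} makes the series \eqref{eq:spectral} converge in $L^2_{\mathbb{K}}(\Omega,\mathfrak{F},\mathsf{P})$, so that it defines a second order, mean square continuous field $\mathbf{X}(t)$, mean square continuity being inherited from the continuity of the finitely many matrix elements in each partial sum together with the uniform $L^2$ control provided by \eqref{eq:convergent}. It then remains to verify \eqref{eq:invariant}, which is exactly the computation \eqref{eq:mean}--\eqref{eq:variance} run in reverse: substituting the vanishing means and $\delta_{VV'}\delta_{mm'}R^{(V)}_{nn'}$ into the transformation law and using the unitarity relation $\sum_{m'}V_{mm'}(g)\overline{V_{m''m'}(g)}=\delta_{mm''}$ collapses the transformed moments back to the untransformed ones.

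The main obstacle I anticipate is not the Schur's lemma step, which is conceptually clean once the set-up is fixed, but the careful verification of the transformation law $\mathcal{U}(g)\,{}_W\mathbf{Y}_{Vm}=\sum_{m'}V_{m'm}(g)\,{}_W\mathbf{Y}_{Vm'}$ in the chart \eqref{eq:map}: since the functions \eqref{eq:spherical} are defined through local coordinates, one must check that the change of argument $t\mapsto g^{-1}t$ genuinely reproduces the representation matrix and introduces no spurious cocycle factor from the fibre identification. Controlling the convergence of the reconstructed series in the converse direction, and in particular justifying the interchange of expectation with the infinite sum through the Bochner integral interpretation of \eqref{eq:Z}, is the other place where care, rather than any new idea, is required.
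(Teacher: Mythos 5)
Your proposal is correct and follows essentially the same route as the paper: the transformation law of the basis functions ${}_W\mathbf{Y}_{Vm}$ under the group action, equating expansion coefficients, and the invariant-vector/Schur argument for both the mean and the second moments, with the converse handled by uniform convergence of the covariance series from \eqref{eq:convergent} and the unitarity of $V$. The only cosmetic difference is that you obtain \eqref{eq:convergent} by integrating $\mathsf{E}\|\mathbf{X}(t)\|^2$ over $T$ and invoking Parseval, whereas the paper evaluates at the base point $t_0$ where $({}_WY_{Vm})_j(t_0)=\sqrt{\dim V/\dim W}\,\delta_{m,p+j}$; both variants require the same care with the redundant index $n$ in $Z^{(V)}_{mn}$, since the coefficient attached to ${}_W\mathbf{Y}_{Vm}$ in the orthonormal expansion is $\sum_nZ^{(V)}_{mn}$ rather than the individual $Z^{(V)}_{mn}$.
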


\begin{proof}
Let $\mathbf{X}(t)$ be a mean square continuous wide sense invariant random field in $\xi$. By Theorem~\ref{th:1}, $\mathbf{X}(t)$ has the form \eqref{eq:spectral}. By \eqref{eq:mean}, we have
\[
\mathsf{E}[X_j(t)]=\sum_{V\in\hat{G}_K(W)}\sum^{\dim V}_{m=1}\sum^{\dim W}_{n=1}\mathsf{E}[Z_{mn}^{(V)}]({}_WY_{Vm})_j(t).
\]
Let $g\in G$. Substitute $gt$ in place of $t$ to the last display. We obtain
\[
\begin{aligned}
\mathsf{E}[X_j(gt)]&=\sum_{V\in\hat{G}_K(W)}\sum^{\dim V}_{m=1}\sum^{\dim W}_{n=1}\mathsf{E}[Z_{mn}^{(V)}]({}_WY_{Vm})_j(gt)\\
&=\sum_{V\in\hat{G}_K(W)}\sum^{\dim V}_{m=1}\sum^{\dim W}_{n=1}\mathsf{E}[Z_{mn}^{(V)}]\sum^{\dim V}_{\ell=1}
\overline{V_{m\ell}(g)}({}_WY_{V\ell})_j(t)\\
&=\sum_{V\in\hat{G}_K(W)}\sum^{\dim V}_{m=1}\sum^{\dim W}_{n=1}\sum^{\dim V}_{\ell=1}\overline{V_{\ell m}(g)}
\mathsf{E}[Z_{\ell n}^{(V)}]({}_WY_{Vm})_j(t).
\end{aligned}
\]
The left hand sides of the two last displays are equal. Therefore, the coefficients of the expansions must be equal.
\[
\sum^{\dim V}_{\ell=1}\overline{V_{\ell m}(g)}
\mathsf{E}[Z_{\ell n}^{(V)}]=\mathsf{E}[Z_{mn}^{(V)}]
\]
Denote $\mathbf{M}_n^{(V)}=(\mathsf{E}[Z_{mn}^{(V)}],\dots,\mathsf{E}[Z_{\dim Vn}^{(V)}])$. Then
\[
V^+(g)\mathbf{M}_n^{(V)}=\mathbf{M}_n^{(V)},\qquad g\in G,
\]
where $V^+(g)=V(g^{-1})^{\top}$ is the representation, dual to the representation $V$. It follows that either $\mathbf{M}_n^{(V)}=\mathbf{0}$ or the one-dimensional subspace generated by $\mathbf{M}_n^{(V)}$ is an invariant subspace of the irreducible representation $V^+$. In the latter case, $V^+$ must be one-dimensional. If $V^+$ is trivial, then $V$ is also trivial, and $\mathbf{M}_n^{(V)}$ is any complex number. If $V^+$ is not trivial, so is $V$. Then, there exist $g\in G$ with $V(g)\neq 1$. It follows that $\mathbf{M}_n^{(V)}=V(g)\mathbf{M}_n^{(V)}=\mathbf{0}$.

By \eqref{eq:variance}, we have
\[
R_{jj'}(t_1,t_2)=\sum_{V,V'\in\hat{G}_K(W)}\sum^{\dim V}_{m=1}\sum^{\dim V'}_{m'=1}\sum^{\dim W}_{n,n'=1}\mathsf{E}[Z_{mn}^{(V)}\overline{Z_{m'n'}^{(V')}}]({}_WY_{Vm})_j(t_1)
\overline{({}_WY_{V'm'})_{j'}(t_2)}.
\]
It follows that
\[
\begin{aligned}
R_{jj'}(gt_1,gt_2)&=\sum_{V,V'\in\hat{G}_K(W)}\sum^{\dim V}_{m=1}\sum^{\dim V'}_{m'=1}\sum^{\dim W}_{n,n'=1}\mathsf{E}[Z_{mn}^{(V)}\overline{Z_{m'n'}^{(V')}}]({}_WY_{Vm})_j(gt_1)
\overline{({}_WY_{V'm'})_{j'}(gt_2)}\\
&=\sum_{V,V'\in\hat{G}_K(W)}\sum^{\dim V}_{m=1}\sum^{\dim V'}_{m'=1}\sum^{\dim W}_{n,n'=1}\mathsf{E}[Z_{mn}^{(V)}\overline{Z_{m'n'}^{(V')}}]\\
&\quad\times\sum^{\dim V}_{\ell=1}\overline{V_{m\ell}(g)}({}_WY_{V\ell})_j(t_1)
\sum^{\dim V'}_{\ell'=1}V'_{m'\ell'}(g)\overline{({}_WY_{V'\ell'})_{j'}(t_2)}\\
&=\sum_{V,V'\in\hat{G}_K(W)}\sum^{\dim V}_{m=1}\sum^{\dim V'}_{m'=1}\sum^{\dim W}_{n,n'=1}\sum^{\dim V}_{\ell=1}\sum^{\dim V'}_{\ell'=1}\overline{V_{m\ell}(g)}V'_{m'\ell'}(g)\\
&\quad\times\mathsf{E}[Z_{\ell n}^{(V)}\overline{Z_{\ell'n'}^{(V')}}]({}_WY_{Vm})_j(t_1)
\overline{({}_WY_{V'm'})_{j'}(t_2)}.
\end{aligned}
\]
By equating the coefficients of the two expansions, we obtain
\[
\sum^{\dim V}_{\ell=1}\sum^{\dim V'}_{\ell'=1}\overline{V_{m\ell}(g)}V'_{m'\ell'}(g)\mathsf{E}[Z_{\ell n}^{(V)}\overline{Z_{\ell'n'}^{(V')}}]=\mathsf{E}[Z_{mn}^{(V)}\overline{Z_{m'n'}^{(V')}}].
\]
Let $P^{(V,V')}_{nn'}$ be the matrix with elements
\[
(P^{(V,V')}_{nn'})_{mm'}=\mathsf{E}[Z_{mn}^{(V)}\overline{Z_{m'n'}^{(V')}}].
\]
Then,
\[
(V^+\otimes V')(g)P^{(V,V')}_{nn'}=P^{(V,V')}_{nn'},\qquad g\in G.
\]
It follows that either $P^{(V,V')}_{nn'}$ is zero matrix or the one-dimensional subspace generated by $P^{(V,V')}_{nn'}$ is an invariant subspace of the representation $V^+\otimes V'$. In the latter case, the representation $V^+\otimes V'$ contains an one-dimensional irreducible component, say $\mathcal{V}$. If $\mathcal{V}$ is trivial, then $V=V'$ and $\mathcal{V}$ acts in the one-dimensional subspace generated by the unit matrix $(P^{(V,V')}_{nn'})_{mm'}$. If $\mathcal{V}$ is not trivial, there exist $g\in G$ with $\mathcal{V}(g)\neq 1$. It follows that $P^{(V,V')}_{nn'}=\mathcal{V}(g)P^{(V,V')}_{nn'}$, so $P^{(V,V')}_{nn'}$ is zero matrix. So,
\[
\mathsf{E}[Z_{mn}^{(V)}\overline{Z_{m'n'}^{(V')}}]=\delta_{VV'}
\delta_{mm'}R^{(V)}_{nn'}.
\]

Let $t_0\in T$ be the left coset of the unit element of $G$. We may assume $t_0\in D_T$ (otherwise use a chart $(gD_T,\mathbf{I}(g^{-1}t))$ for a suitable $g\in G$). Then
\[
({}_WY_{Vm})_j(t_0)=\sqrt{\frac{\dim V}{\dim W}}\delta_{m,p+j}
\]
and
\[
\begin{aligned}
X_j(t_0)&=\sum_{V\in\hat{G}_K(W)}\sum^{\dim V}_{m=1}\sum^{\dim W}_{n=1}Z_{mn}^{(V)}({}_WY_{Vm})_j(t)\\
&=\frac{1}{\sqrt{\dim W}}\sum_{V\in\hat{G}_K(W)}\sqrt{\dim V}\sum^{\dim W}_{n=1}Z_{jn}^{(V)}.
\end{aligned}
\]
It follows that
\[
\begin{aligned}
\mathsf{E}|X_j(t_0)|^2&=\frac{1}{\dim W}\sum_{V\in\hat{G}_K(W)}\dim V\dim W\mathsf{E}|Z_{j1}^{(V)}|^2\\
&=\sum_{V\in\hat{G}_K(W)}\dim V R^{(V)}_{jj},
\end{aligned}
\]
and
\[
\sum_{V\in\hat{G}_K(W)}\dim V\tr[R^{(V)}]=\sum^{\dim W}_{j=1}\mathsf{E}|X_j(t_0)|^2<\infty.
\]

Conversely, let $Z_{mn}^{(V)}$, $V\in\hat{G}_K(W)$, $1\leq m\leq\dim V$, $1\leq n\leq\dim W$ be random variables satisfying conditions of Theorem~\ref{th:2}. Consider random field \eqref{eq:spectral}. Then, its mean value is
\[
E[X_j(t)]=
\begin{cases}
\mathsf{E}[Z^{(V_0)}_{11}],&V_0\in\hat{G}_K(W),\\
0,&\text{otherwise},
\end{cases}
\]
which is constant. Note that $V_0\in\hat{G}_K(W)$ if and only if $W$ is trivial (by Frobenius reciprocity).

The correlation operator of the random field \eqref{eq:spectral} is
\[
\begin{aligned}
R_{jj'}(t_1,t_2)&=\sum_{V,V'\in\hat{G}_K(W)}\sum^{\dim V}_{m=1}\sum^{\dim V'}_{m'=1}\sum^{\dim W}_{n,n'=1}\mathsf{E}[Z_{mn}^{(V)}\overline{Z_{m'n'}^{(V')}}]({}_WY_{Vm})_j(t_1)
\overline{({}_WY_{V'm'})_{j'}(t_2)}\\
&=\sum_{V\in\hat{G}_K(W)}\sum^{\dim W}_{n,n'=1}R^{(V)}_{nn'}\sum^{\dim V}_{m=1}({}_WY_{Vm})_j(t_1)
\overline{({}_WY_{Vm})_{j'}(t_2)}\\
&=\frac{1}{\dim W}\sum_{V\in\hat{G}_K(W)}\dim V\sum^{\dim W}_{n,n'=1}R^{(V)}_{nn'}V_{p+j,p+j'}(g_1^{-1}g_2),
\end{aligned}
\]
where $g_1$ (resp. $g_2$) is an arbitrary element from the left coset corresponding to $t_1$ (resp. $t_2$). The terms of this functional series are bounded by the terms of the convergent series \eqref{eq:convergent}, because $|V_{p+j,p+j'}(g_1^{-1}g_2)|\leq 1$. Therefore, the series converges uniformly, and its sum is continuous function. This means that $\mathbf{X}(t)$ is mean square continuous.

For any $g\in G$, we have
\[
\begin{aligned}
R_{jj'}(gt_1,gt_2)&=\frac{1}{\dim W}\sum_{V\in\hat{G}_K(W)}\dim V\sum^{\dim W}_{n,n'=1}R^{(V)}_{nn'}V_{p+j,p+j'}((gg_1)^{-1}gg_2)\\
&=R_{jj'}(t_1,t_2),
\end{aligned}
\]
so $\mathbf{X}(t)$ is invariant.
\end{proof}

Assume that $W$ is not necessarily irreducible representation of $K$ in a finite-dimensional complex Hilbert space $H$. Because $K$ is compact, the representation~$W$ is equivalent to a direct sum $W_1\oplus W_2\oplus\cdots\oplus W_N$ of irreducible unitary representations of $K$. The representation induced by $W$ is a direct sum of representations induced by $W_k$, $1\leq k\leq N$. It is realised in a homogeneous vector bundle $\xi=\xi_1\oplus\xi_2\oplus\cdots\oplus\xi_N$, where $\xi_k$ is the homogeneous vector bundle that carries the irreducible component $W_k$.

Let $\mathbf{X}(t)$ be an invariant random field in $\xi$. Denote the components of $\mathbf{X}(t)$ by $X^{(k)}_j(t)$, $1\leq k\leq N$, $1\leq j\leq\dim W_k$. Denote by $P_k$ the orthogonal projection from $H$ onto the space $H_k$ where the irreducible component $W_k$ acts.

\begin{theorem}\label{th:3}
Under conditions of Theorem~\ref{th:1}, the following statements are equivalent.

\begin{enumerate}

\item $\mathbf{X}(t)$ is a mean square continuous wide sense invariant random field in $\xi$.

\item $\mathbf{X}(t)$ has the form
\begin{equation}\label{eq:spectralreducible}
X^{(k)}_j(t)=\sum_{V\in\hat{G}_K(W_k)}\sum^{\dim V}_{m=1}\sum^{\dim W_k}_{n=1}Z^{(Vk)}_{mn}({}_{W_k}Y_{Vm})_j(t),
\end{equation}
where $Z_{mn}^{(Vk)}$, $1\leq k\leq N$, $V\in\hat{G}_K(W_k)$, $1\leq m\leq\dim V$, $1\leq n\leq\dim W_k$ are random variables satisfying the following conditions.

    \begin{itemize}

    \item If $V\neq V_0$, then $\mathsf{E}[Z_{mn}^{(Vk)}]=0$.

    \item $\mathsf{E}[Z_{mn}^{(Vk)}\overline{Z_{m'n'}^{(V'k')}}]=\delta_{VV'}
        \delta_{mm'}R^{(V)}_{kn,k'n'}$, with
        \[
        \sum^N_{k=1}\sum_{V\in\hat{G}_K(W_k)}\dim V\tr[P_kR^{(V)}P_k]<\infty.
        \]

    \end{itemize}

\end{enumerate}
\end{theorem}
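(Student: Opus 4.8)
The plan is to reduce everything to the irreducible case already settled in Theorems~\ref{th:1} and~\ref{th:2}, exploiting the two facts about induced representations listed before Theorem~\ref{th:1}. Since $W\cong W_1\oplus\cdots\oplus W_N$, the induced representation splits as $\mathcal{U}=\mathcal{U}_1\oplus\cdots\oplus\mathcal{U}_N$ and the bundle as $\xi=\xi_1\oplus\cdots\oplus\xi_N$, where each $\xi_k$ is the homogeneous vector bundle attached to the irreducible $W_k$. The components $X^{(k)}_j(t)$ are then the coordinates of the random section of $\xi_k$ obtained by composing $\mathbf{X}(t)$ with the orthogonal projection $P_k$. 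Because $P_k$ is a fixed bounded operator on $H=H_1\oplus\cdots\oplus H_N$ and all fibres are identified with $H$, the identity $\|\mathbf{X}(s)-\mathbf{X}(t)\|^2=\sum_k\|P_k(\mathbf{X}(s)-\mathbf{X}(t))\|^2$ shows that mean square continuity of $\mathbf{X}(t)$ passes to each component, so Theorem~\ref{th:1} applies to $\xi_k$ and yields the expansion \eqref{eq:spectralreducible} with $Z^{(Vk)}_{mn}$ given by the analogue of \eqref{eq:Z}. This disposes of the existence of the expansion; the content of the theorem is the characterisation of the covariance structure of the coefficients.

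For the direct implication I would follow the proof of Theorem~\ref{th:2} almost verbatim. The mean value $\mathsf{E}[X^{(k)}_j(t)]$ is the $k$-component of an invariant section, so substituting $gt$ for $t$ in \eqref{eq:spectralreducible}, using the transformation rule $({}_{W_k}Y_{Vm})_j(gt)=\sum_\ell\overline{V_{m\ell}(g)}({}_{W_k}Y_{V\ell})_j(t)$, and equating coefficients gives $V^+(g)\mathbf{M}^{(Vk)}_n=\mathbf{M}^{(Vk)}_n$; Schur's lemma then forces $\mathsf{E}[Z^{(Vk)}_{mn}]=0$ whenever $V\neq V_0$, exactly as before. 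The essential new point is the covariance. Writing the full covariance operator in block form and imposing $R(gt_1,gt_2)=R(t_1,t_2)$, I would equate the coefficients of $({}_{W_k}Y_{Vm})_j(t_1)\overline{({}_{W_{k'}}Y_{V'm'})_{j'}(t_2)}$ to obtain, for each fixed quadruple $(k,n,k',n')$,
\[
\sum_{\ell,\ell'}\overline{V_{m\ell}(g)}V'_{m'\ell'}(g)\,\mathsf{E}[Z^{(Vk)}_{\ell n}\overline{Z^{(V'k')}_{\ell'n'}}]=\mathsf{E}[Z^{(Vk)}_{mn}\overline{Z^{(V'k')}_{m'n'}}].
\]
This is precisely the intertwining relation $(V^+\otimes V')(g)P=P$ met in Theorem~\ref{th:2}, so the same Schur argument gives $\mathsf{E}[Z^{(Vk)}_{mn}\overline{Z^{(V'k')}_{m'n'}}]=\delta_{VV'}\delta_{mm'}R^{(V)}_{kn,k'n'}$.

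I expect the one point needing care here to be the observation that the transformation rule for ${}_{W_k}Y_{Vm}$ under $G$ depends only on $V$ and the index $m$, not on $k$ or on the embedding offset $p$; this is what lets the cross-block terms $k\neq k'$ obey the very same relation and be treated jointly rather than block by block. Applying Theorem~\ref{th:2} separately to each subbundle would only constrain the diagonal blocks, whereas the joint invariance of $\mathbf{X}(t)$ is what kills the off-diagonal correlations outside $V=V'$, $m=m'$.

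The convergence bound and the converse then follow the template of Theorem~\ref{th:2}. Evaluating $\sum_{k,j}\mathsf{E}|X^{(k)}_j(t_0)|^2$ at the base coset $t_0$ collapses ${}_{W_k}Y_{Vm}(t_0)$ to $\sqrt{\dim V/\dim W}\,\delta_{m,p+j}$ and produces $\sum_k\sum_V\dim V\,\tr[P_kR^{(V)}P_k]<\infty$, the $k$-th summand being the partial trace over the block $H_k$. For the converse I would build the field from \eqref{eq:spectralreducible}, bound the block-diagonal terms of its covariance by the convergent series using $|V_{p+j,p+j'}(g_1^{-1}g_2)|\le1$ to obtain uniform convergence and hence mean square continuity, and verify $R(gt_1,gt_2)=R(t_1,t_2)$ by the cocycle computation $(gg_1)^{-1}gg_2=g_1^{-1}g_2$.
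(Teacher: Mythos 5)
Your proof is correct, but it takes a genuinely different route from the paper. The paper proves Theorem~\ref{th:3} by induction on $N$: the base case is Theorem~\ref{th:2}; for the inductive step it splits $\mathbf{X}(t)$ into the sub-field formed by the first $N-1$ blocks (handled by the induction hypothesis) and the $N$-th block (handled by Theorem~\ref{th:2}), and then simply asserts that the combined matrix $R^{(V)}_{kn,k'n'}=\mathsf{E}[Z^{(Vk)}_{1n}\overline{Z^{(Vk')}_{1n'}}]$ satisfies the stated conditions. You instead rerun the Schur-lemma argument of Theorem~\ref{th:2} directly on the whole direct sum, equating the coefficients of $({}_{W_k}Y_{Vm})_j(t_1)\overline{({}_{W_{k'}}Y_{V'm'})_{j'}(t_2)}$ for every quadruple $(k,n,k',n')$ including $k\neq k'$, and observing that the transformation rule for ${}_{W_k}Y_{Vm}$ under $G$ depends only on $V$ and $m$, so the cross-block correlation matrices satisfy the same intertwining relation $(V^+\otimes V')(g)P=P$ and are killed unless $V=V'$ and $m=m'$. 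This is precisely the point the paper's induction passes over with the word ``obviously'': applying the irreducible-case theorem block by block constrains only the diagonal blocks, and the vanishing of the off-diagonal correlations for $V\neq V'$ or $m\neq m'$ requires the joint invariance of the full field. Your direct argument makes this explicit and is, in that respect, more complete; the paper's induction is shorter and delegates the repeated computations to Theorem~\ref{th:2}. The remaining steps (the trace bound obtained by evaluating at the base coset $t_0$, and the converse via uniform convergence and the identity $(gg_1)^{-1}gg_2=g_1^{-1}g_2$) match the paper's.
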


\begin{proof}
Use mathematical induction. The induction base, when $N=1$, is Theorem~\ref{th:2}. Assume the induction hypotheses:  Theorem~\ref{th:3} is proved up to $N-1$.

Let $\mathbf{X}(t)$ be a mean square continuous wide sense invariant random field in $\xi$. Then the field
\[
\mathbf{Y}_1(t)=(X^{(1)}_1(t),\dots,X^{(1)}_{\dim W_1}(t),\dots,X^{(N-1)}_1(t),\dots,X^{(N-1)}_{\dim W_{N-1}}(t)),\\
\]
is a mean square continuous wide sense invariant random field in $\xi_1\oplus\cdots\oplus\xi_{N-1}$. By the induction hypotheses, \[
X^{(k)}_j(t)=\sum_{V\in\hat{G}_K(W_k)}\sum^{\dim V}_{m=1}\sum^{\dim W_k}_{n=1}Z^{(Vk)}_{mn}({}_{W_k}Y_{Vm})_j(t),\qquad 1\leq k\leq N-1,
\]
where $\mathsf{E}[Z_{mn}^{(Vk)}]=0$ unless $V\neq V_0$ and $\mathsf{E}[Z_{mn}^{(Vk)}\overline{Z_{m'n'}^{(V'k')}}]=\delta_{VV'}
\delta_{mm'}R^{(V,N-1)}_{kn,k'n'}$, with
\[
\sum^{N-1}_{k=1}\sum_{V\in\hat{G}_K(W_k)}\dim V\tr[P_kR^{(V,N-1)}P_k]<\infty.
\]
The field
\[
\mathbf{Y}_2(t)=(X^{(N)}_1(t),\dots,X^{(N)}_{\dim W_N}(t))
\]
is a mean square continuous wide sense invariant random field in $\xi_N$. By Theorem~\ref{th:2},
\[
X^{(N)}_j(t)=\sum_{V\in\hat{G}_K(W_N)}\sum^{\dim V}_{m=1}\sum^{\dim W_N}_{n=1}Z^{(VN)}_{mn}({}_{W_N}Y_{Vm})_j(t),
\]
where $\mathsf{E}[Z_{mn}^{(VN)}]=0$ unless $V\neq V_0$ and $\mathsf{E}[Z_{mn}^{(VN)}\overline{Z_{m'n'}^{(V'N)}}]=\delta_{VV'}
\delta_{mm'}R^{(VN)}_{nn'}$, with
\[
\sum_{V\in\hat{G}_K(W_N)}\dim V\tr[R^{(VN)}]<\infty.
\]
The matrix $R^{(V)}_{kn,k'n'}$ with elements
\[
R^{(V)}_{kn,k'n'}=\mathsf{E}[Z_{1n}^{(Vk)}\overline{Z_{1n'}^{(Vk')}}]
\]
obviously satisfies conditions of the second item of Theorem~\ref{th:3}.

Conversely, let $Z_{mn}^{(Vk)}$, $1\leq k\leq N$, $V\in\hat{G}_K(W_k)$, $1\leq m\leq\dim V$, $1\leq n\leq\dim W_k$ be random variables satisfying conditions of Theorem~\ref{th:3}. Consider random field \eqref{eq:spectralreducible}. Its mean value is obviously constant. Its correlation operator is
\[
\begin{aligned}
R^{(kk')}_{jj'}(t_1,t_2)&=\sum_{V\in\hat{G}_K(W_k)\cap\hat{G}_K(W_{k'})}
\sum^{\dim W_k}_{n=1}\sum^{\dim W_{k'}}_{n'=1}R^{(V)}_{kn,k'n'}\sum^{\dim V}_{m=1}
({}_{W_k}Y_{Vm})_j(t_1)\overline{({}_{W_{k'}}Y_{Vm})_{j'}(t_2)}\\
&=\frac{1}{\sqrt{\dim W_k\dim W_{k'}}}\sum_{V\in\hat{G}_K(W_k)\cap\hat{G}_K(W_{k'})}\dim V\sum^{\dim W_k}_{n=1}\sum^{\dim W_{k'}}_{n'=1}R^{(V)}_{kn,k'n'}\\
&\quad\times V_{p+j,p+j'}(g^{-1}_1g_2),
\end{aligned}
\]
with the same notation as in proof of Theorem~\ref{th:2}. The uniform convergence of the above series and the invariance of the field \eqref{eq:spectralreducible} is proved exactly in the same way as in proof of Theorem~\ref{th:2}.
\end{proof}

\section{Application to cosmology}\label{sec:cosmology}

\subsection{The cosmic microwave background}\label{sub:deterministic}

Let $\mathbf{E}(\mathbf{n})\in T_{\mathbf{n}}S^2$ be the electric field of the cosmic microwave background. From the observations, we define the intensity tensor. In physical terms, the intensity tensor is
\[
\mathcal{P}=C\langle\mathbf{E}(\mathbf{n})\otimes\mathbf{E}^*(\mathbf{n})\rangle,
\]
where $\langle\boldsymbol\cdot\rangle$ denote time average over the historical accidents that produced a particular pattern of fluctuations. Assuming ergodicity, time average is equal to the space average, i.e., average over the possible positions from which the radiation could be observed. The constant $C$ is chosen so that $\mathcal{P}$ is measured in brightness temperature units (in these units, the intensity tensor is independent of radiation frequency). It will be ignored in what follows.

Introduce a basis in each tangent plane $T_{\mathbf{n}}S^2$. Realise $S^2$ as $\{\,(x,y,z)\in\mathbb{R}^3\colon x^2+y^2+z^2=1\,\}$ and define the chart $(U_I,\mathbf{h}_I)$ as $U_I=S^2\setminus\{(0,0,1),(0,0,-1)\}$ and $\mathbf{h}_I(\mathbf{n})=(\theta(\mathbf{n}),\varphi(\mathbf{n}))\in\mathbb{R}^2$, the spherical coordinates. Let $SO(3)$ be the rotation group in $\mathbb{R}^3$. For any rotation $g$, define the chart $(U_g,\mathbf{h}_g)$ as
\[
U_g=gU_I,\qquad\mathbf{h}_g(\mathbf{n})=h_I(g^{-1}\mathbf{n}).
\]
The sphere $S^2$, equipped with the atlas $\{\,(U_g,\mathbf{h}_g)\colon g\in SO(3)\,\}$, becomes the real-analytic manifold. The local $\theta$-axis in each tangent plane is along the direction of decreasing the inclination $\theta$:
\[
\mathbf{e}_{\theta}=-\frac{\partial}{\partial\theta}.
\]
The local $\varphi$-axis is along the direction of increasing the azimuth $\varphi$:
\[
\mathbf{e}_{\varphi}=(1/\sin\theta)\frac{\partial}{\partial\varphi}.
\]
With this convention, $\mathbf{e}_{\theta}$, $\mathbf{e}_{\varphi}$, and the direction of radiation propagation $-\mathbf{n}$ form a right-handed basis. This convention is in accordance with the International Astronomic Union standard. The orthonormal  basis $(\mathbf{e}_{\theta},\mathbf{e}_{\varphi})$ turns $S^2$ into a Riemannian manifold and each tangent plane $T_{\mathbf{n}}S^2$ can be identified with the space $\mathbb{R}^2$.

In the just introduced basis, the intensity tensor becomes the intensity matrix:
\[
\mathcal{P}_{ab}=\langle\mathbf{E}_a(\mathbf{n})\otimes\mathbf{E}^*_b(\mathbf{n})\rangle,
\qquad a,b\in\{\theta,\varphi\}.
\]
The rotations about the line of sight together with \emph{parity transformation} $\mathbf{n}\to-\mathbf{n}$ generate the group $O(2)$ of orthogonal matrices in $\mathbb{R}^2$. The action of $O(2)$ on the intensity matrix extends to the representation $g\mapsto g\mathcal{A}g^{-1}$ of $O(2)$ in the real $4$-dimensional space of Hermitian $2\times 2$ matrices $\mathcal{A}$ with inner product
\[
(\mathcal{A},\mathcal{B})=\tr(\mathcal{A}\mathcal{B}).
\]
This representation is reducible and may be decomposed into the direct sum of three irreducible representations.

The standard choice of an orthonormal basis in the spaces of the irreducible components is as follows. The space of the first irreducible component is generated by the matrix
\[
\frac{1}{2}\sigma_0=\frac{1}{2}
\begin{pmatrix}
1 & 0\\
0 & 1
\end{pmatrix}
\]
The representation in this space is the trivial representation of the group $O(2)$. Physicists call the elements of this space \emph{scalars}.

The space of the second irreducible component is generated by the matrices
\[
\frac{1}{2}\sigma_1=\frac{1}{2}
\begin{pmatrix}
0 & 1\\
1 & 0
\end{pmatrix}
,\qquad\frac{1}{2}\sigma_3=\frac{1}{2}
\begin{pmatrix}
1 & 0\\
0 & -1
\end{pmatrix}
.
\]
Let $g_{\alpha}\in SO(2)$ with
\begin{equation}\label{eq:so2}
g_{\alpha}=
\begin{pmatrix}
\cos\alpha & \sin\alpha\\
-\sin\alpha & \cos\alpha
\end{pmatrix}
.
\end{equation}
It is easy to check that
\begin{equation}\label{eq:rotation}
\begin{aligned}
g_{\alpha}\sigma_1g^{-1}_{\alpha}&=\cos(2\alpha)\sigma_1+\sin(2\alpha)\sigma_3,\\
g_{\alpha}\sigma_3g^{-1}_{\alpha}&=-\sin(2\alpha)\sigma_1+\cos(2\alpha)\sigma_3.
\end{aligned}
\end{equation}
The elements of this space are \emph{symmetric trace-free tensors}.

Finally, the space of the third irreducible component is generated by the matrix
\[
\frac{1}{2}\sigma_2=\frac{1}{2}
\begin{pmatrix}
0 & -\mathrm{i}\\
\mathrm{i} & 0
\end{pmatrix}
\]
The representation in this space is the representation $g\mapsto\det g$ of the group $O(2)$. Physicists call the elements of this space \emph{pseudo-scalars} (they do not change under rotation but change sign under reflection). The matrices $\sigma_1$, $\sigma_2$, and $\sigma_3$ are known as \emph{Pauli matrices}.

The standard physical notation for the components of the intensity matrix in the above basis is as follows:
\[
\mathcal{P}=\frac{1}{2}(I\sigma_0+U\sigma_1+V\sigma_2+Q\sigma_3),
\]
or
\[
\mathcal{P}=\frac{1}{2}
\begin{pmatrix}
I+Q & U-\mathrm{i}V\\
U+\mathrm{i}V & I-Q
\end{pmatrix}
.
\]
The real numbers $I$, $Q$, $U$, and $V$ are called \emph{Stokes parameters}. Their physical sense is as follows. $I$ is the total intensity of the radiation (which is directly proportional to the fourth power of the absolute temperature $T$ by the Stefan--Boltzmann law). On the tangent plane $T_{\mathbf{n}}S^2$, the tip of the electric vector $\mathbf{E}(\mathbf{n})$ traces out an ellipse as a function of time. The parameters $U$ and $Q$ measure the orientation of the above ellipse relative to the local $\theta$-axis, $\mathbf{e}_{\theta}$. The \emph{polarisation angle} between the major axis of the ellipse and $\mathbf{e}_{\theta}$ is
\[
\chi=\frac{1}{2}\tan^{-1}\frac{U}{Q},
\]
and the length of the major semi-axis is $(Q^2+U^2)^{1/2}$. The last parameter, $V$, measures circular polarisation.

According to modern cosmological theories, the polarisation of the CMB was introduced while scattering off the photons by charged particles. This process cannot induce circular polarisation in the scattered light. Therefore, in what follows we put $V=0$.

The physics of the CMB polarisation is described in Cabella and Kamionkowski (2005), Challinor (2004), Challinor (2009), Challinor and Peiris (2009), Durrer (2008), Lin and Wandelt (2006), among others. Of these, Challinor and Peiris use the right-hand basis, while the remaining authors use the left-hand basis, in which $\mathbf{e}_{\theta}=\partial/\partial\theta$. In what follows,  we use the left-hand basis $\mathbf{e}_{\theta}$, $\mathbf{e}_{\varphi}$, $-\mathbf{n}$ with
\begin{equation}\label{eq:realbasis}
\mathbf{e}_{\theta}=\frac{\partial}{\partial\theta},\qquad
\mathbf{e}_{\varphi}=(1/\sin\theta)\frac{\partial}{\partial\varphi}.
\end{equation}

\subsection{The probabilistic model of the CMB}\label{sub:probabilistic}

The absolute temperature, $T(\mathbf{n})$, is a section of the homogeneous vector bundle $\xi_0=(\mathcal{E}_0,\pi,S^2)$, where the representation of the rotation group $G=SO(3)$ induced by the representation $W(g_{\alpha})=1$ of the massive subgroup $K=SO(2)$ is realised.

The representations $V$ of the group $G$ are enumerated by nonnegative integers $\ell=0$, $1$, \dots. The restriction of the representation $V_{\ell}$ onto $K$ is the direct sum of the representations $e^{\mathrm{i}m\alpha}$, $m=-\ell$, $-\ell+1$, \dots, $\ell$. Therefore we have $\dim V_{\ell}=2\ell+1$ and $|m|\leq\ell$. By Frobenius reciprocity,  $\hat{G}_K(W)=\{V_0,V_1,\dots,V_{\ell},\dots\}$.

The representations $e^{\mathrm{i}m\alpha}$ of $K$ act in one-dimensional complex spaces $H_m$. To define a basis in the space $H^{(\ell)}$ of the representation $V_{\ell}$, choose a unit vector $\mathbf{e}_m$ in each space $H_m$. Each vector $\mathbf{e}_m$ of a basis can be multiplied by a \emph{phase} $e^{\mathrm{i}\alpha_m}$. The choice of a phase is called the \emph{phase convention}.

Any rotation $g\in SO(3)$ is defined by the Euler angles $g=(\varphi,\theta,\psi)$ with $\varphi$, $\psi\in[0,2\pi]$ and $\theta\in[0,\pi]$. The order in which the angles are given and the axes about which they are applied are not subject of a standard. We adopt the so called \emph{$zxz$ convention}: the first rotation is about the $z$-axis by $\psi$, the second rotation is about the $x$-axis by $\theta$, and the third rotation is about $z$-axis by $\varphi$. Note that the chart defined by the Euler angles satisfies our condition: the first two local coordinates $(\varphi,\theta)$ are spherical coordinates in $S^2$ (up to order) with dense domain $U_I$.

The matrix elements of the representation $V_{\ell}$ are traditionally denoted by
\[
D^{(\ell)}_{mn}(\varphi,\theta,\psi)=(V_{\ell}(\varphi,\theta,\psi)
\mathbf{e}_m,\mathbf{e}_n)_{H^{(\ell)}}
\]
and called \emph{Wigner $D$-functions}. The explicit formula for the Wigner $D$-function depends on the phase convention. Choose the basis $\{\,\mathbf{e}_m\colon-\ell\leq m\leq\ell\,\}$ in every space $H^{(\ell)}$ to obtain
\[
D^{(\ell)}_{mn}(\varphi,\theta,\psi)=e^{-\mathrm{i}m\varphi}
d^{(\ell)}_{mn}(\theta)e^{-\mathrm{i}n\psi},
\]
where $d^{(\ell)}_{mn}(\theta)$ are \emph{Wigner $d$-functions}:
\begin{equation}\label{eq:durrerelements}
\begin{aligned}
d^{(\ell)}_{mn}(\theta)&=(-1)^m\sqrt{\frac{(\ell+m)!(\ell-m)!}{(\ell+n)!(\ell-n)!}}
\sin^{2\ell}(\theta/2)\\
&\quad\times\sum^{\min\{\ell+m,\ell+n\}}_{r=\max\{0,m+n\}}\binom{\ell+n}{r}
\binom{\ell-n}{r-m-n}(-1)^{\ell-r+n}\cot^{2r-m-n}(\theta/2).
\end{aligned}
\end{equation}
The following symmetry relation follows.
\begin{equation}\label{eq:symmetry}
d^{(\ell)}_{-m,-n}(\theta)=(-1)^{n-m}d^{(\ell)}_{mn}(\theta)
\end{equation}

In this particular case, formula \eqref{eq:spherical} takes the form
\[
{}_WY_{\ell m}(\theta,\varphi)=\sqrt{2\ell+1}\overline{D^{(\ell)}_{m0}(\varphi,\theta,0)}.
\]
The functions in the left hand side form an orthonormal basis in the space of the square integrable functions on $S^2$ with respect to the probabilistic $SO(3)$-invariant measure. It is conventional to form a basis with respect to the Lebesgue measure induced by the embedding $S^2\subset\mathbb{R}^3$ which is $4\pi$ times the probabilistic invariant measure, and omit the first subscript:
\[
Y_{\ell m}(\theta,\varphi)=\sqrt{\frac{2\ell+1}{4\pi}}\overline{D^{(\ell)}_{m0}(\varphi,\theta,0)}.
\]
This is formula (A4.40) from Durrer (2008) defining the \emph{spherical harmonics}.

In cosmological models, one assumes that $T(\mathbf{n})$ is a single realisation of the mean square continuous strict sense $SO(3)$-invariant random field in the homogeneous vector bundle $\xi=(\mathcal{E}_0,\pi,S^2)$. It is custom to use the term ``\emph{isotropic}" instead of ``$SO(3)$-invariant". By Theorem~\ref{th:2}, we have
\[
T(\mathbf{n})=\sum^{\infty}_{\ell=0}\sum^{\ell}_{m=-\ell}Z_{\ell m}Y_{\ell m}(\mathbf{n}),
\]
where $\mathsf{E}[Z_{\ell m}]=0$ unless $\ell=0$ and $\mathsf{E}[Z_{\ell m}\overline{Z_{\ell'm'}}]=\delta_{\ell\ell'}\delta_{mm'}R^{(\ell)}$ with
\[
\sum^{\infty}_{\ell=0}(2\ell+1)R^{(\ell)}<\infty.
\]
This formula goes back to Obukhov (1947).

Physicists call $Z_{\ell m}$s the \emph{expansion coefficients}, and $R^{(\ell)}$ the \emph{power spectrum} of the CMB. Different notations for the expansion coefficients and power spectrum may be found in the literature. Some of them are shown in Table~\ref{tab1}.

\begin{table}
\centering
\begin{tabular}{|l|c|c|}
\hline \textbf{Source} & $Z_{\ell m}$ & $R^{(\ell)}$ \\
\hline Cabella and Kamionkowski (2005) & $a^T_{\ell m}$ & $C^{TT}_{\ell}$ \\
\hline Challinor (2005), & & \\
Challinor and Peiris (2009) & $T_{\ell m}$ & $C^T_{\ell}$ \\
\hline Durrer (2008), & & \\
Weinberg (2008) & $a_{\ell m}$ & $C_{\ell}$ \\
\hline Lin and Wandelt (2006), & & \\
Zaldarriaga and Seljak (1997) & $a_{T,\ell m}$ & $C_{T\ell}$ \\
\hline Kamionkowski et al (1997) & $a^T_{\ell m}$ & $C^T_{\ell}$ \\
\hline
\end{tabular}
\caption{Examples of different notation for temperature expansion coefficients and power spectrum.}\label{tab1}
\end{table}

In what follows, we use notation of Lin ans Wandelt (2006). In this notation, the expansion for the temperature has the form
\begin{equation}\label{eq:temperature}
T(\mathbf{n})=\sum^{\infty}_{\ell=0}\sum^{\ell}_{m=-\ell}a_{T,\ell m}Y_{\ell m}(\mathbf{n}).
\end{equation}
Since $T(\mathbf{n})$ is real, the coefficients $a_{T,\ell m}$ must satisfy the \emph{reality condition} which depends on the phase convention. For our current convention, when the Wigner $d$-function is determined by \eqref{eq:durrerelements}, we have
\[
\begin{aligned}
Y_{\ell\;-m}(\theta,\varphi)&=\sqrt{\frac{2\ell+1}{4\pi}}
e^{-\mathrm{i}m\varphi}d^{(\ell)}_{-m,0}(\theta)\\
&=\sqrt{\frac{2\ell+1}{4\pi}}
e^{-\mathrm{i}m\varphi}(-1)^md^{(\ell)}_{m0}(\theta)\\
&=(-1)^m\overline{Y_{\ell m}(\theta,\varphi)}.
\end{aligned}
\]
Here we used the symmetry relation \eqref{eq:symmetry}. The reality condition is
\begin{equation}\label{eq:ordinaryreality}
a_{T,\ell\;-m}=(-1)^m\overline{a_{T,\ell m}}.
\end{equation}
This form of the reality condition is used by Cabella and Kamionkowski (2005), Challinor (2004, 2009), Challinor and Peiris (2009), Durrer (2008), Kamionkowski et al (1997), Lin and Wandelt (2006), among others.

Introduce the following notation: $m^+=\max\{m,0\}=(|m|+m)/2$, $m^-=\max\{-m,0\}=(|m|-m)/2$. We have $(-m)^-=m^+$ and $m^+-m=m^-$. If we choose another basis, $\{\,(-1)^{m^-}\mathbf{e}_m\colon-\ell\leq m\leq\ell\,\}$, then the Wigner $D$-function, $D^{(\ell)}_{mn}(\varphi,\theta,\psi)$, is multiplying by $(-1)^{m^-+n^-}$, and we obtain
\[
\begin{aligned}
Y_{\ell\;-m}(\theta,\varphi)&=\sqrt{\frac{2\ell+1}{4\pi}}
(-1)^{(-m)^-}e^{-\mathrm{i}m\varphi}d^{(\ell)}_{-m,0}(\theta)\\
&=\sqrt{\frac{2\ell+1}{4\pi}}
(-1)^{m+m^-}e^{-\mathrm{i}m\varphi}(-1)^md^{(\ell)}_{m0}(\theta)\\
&=\overline{Y_{\ell m}(\theta,\varphi)}.
\end{aligned}
\]
The modified reality condition is
\begin{equation}\label{eq:modifiedreality}
a_{T,\ell\;-m}=\overline{a_{T,\ell m}}.
\end{equation}
This form of reality condition is used by Geller and Marinucci (2008), Weinberg (2008), Zaldarriaga and Seljak (1997), among others.

Let $T_0=\mathsf{E}[T(\mathbf{n})]$. The temperature fluctuation, $\Delta T(\mathbf{n})=T(\mathbf{n})-T_0$, expands as
\[
\Delta T(\mathbf{n})=\sum^{\infty}_{\ell=1}\sum^{\ell}_{m=-\ell}a_{T,\ell m}Y_{\ell m}(\mathbf{n}).
\]
The part of this sum corresponding to $\ell=1$ is called a \emph{dipole}. When analysing data, the dipole is usually removed since it linearly depends on the velocity of the observer's motion relative to the surface of last scattering.

The \emph{complex polarisation} is defined as $Q+\mathrm{i}U$. It follows easily from \eqref{eq:rotation} that any rotation \eqref{eq:so2} maps $Q+\mathrm{i}U$ to $e^{2\mathrm{i}\alpha}(Q+\mathrm{i}U)$. Then, by \eqref{eq:localrotation}, $(Q+\mathrm{i}U)(\mathbf{n})$ is a section of the homogeneous vector bundle $\xi_{-2}=(\mathcal{E}_{-2},\pi,S^2)$, where the representation of the rotation group $G=SO(3)$ induced by the representation $W(g_{\alpha})=e^{-2\mathrm{i}\alpha}$ of the massive subgroup $K=SO(2)$ is realised. By Frobenius reciprocity,  $\hat{G}_K(W)=\{V_2,V_3,\dots,V_{\ell},\dots\}$.

In general, let $s\in\mathbb{Z}$, and let $\xi_{-s}=(\mathcal{E}_{-s},\pi,S^2)$ be the homogeneous vector bundle where the representation of the rotation group $SO(3)$ induced by the representation $W(g_{\alpha})=e^{-\mathrm{i}s\alpha}$ of the massive subgroup $SO(2)$ is realised. In the physical literature, the sections of these bundle are called

\begin{itemize}

\item quantities of spin $s$ by Challinor (2009), Challinor and Peiris (2009), Geller and Marinucci (2008), Newman and Penrose (1966), Weinberg (2008) among others;

\item quantities of spin $-s$ by Cabella and Kamionkowski (2005), Lin and Wendelt (2006), Zaldarriaga and Seljak (1997) among others;

\item quantities of spin $|s|$ and helicity $s$ by Durrer (2008) among others.

\end{itemize}

Let $g=(\theta,\varphi,\psi)$ be the Euler angles in $SO(3)$. Put $\psi=0$. Then, $\mathbf{n}=(\theta,\varphi,0)$ are spherical coordinates in $S^2$. By \eqref{eq:deterministic} we obtain
\[
(Q+\mathrm{i}U)(\mathbf{n})=\sum^{\infty}_{\ell=2}\sum^{\ell}_{m=-\ell}
a_{-2,\ell m}\;{}_{-2}Y_{\ell m}(\mathbf{n}),
\]
where
\[
a_{-2,\ell m}=\int_{S^2}(Q+\mathrm{i}U)(\mathbf{n})
\overline{{}_{-2}Y_{\ell m}(\mathbf{n})}\,\mathrm{d}\mathbf{n}
\]
and, by \eqref{eq:spherical},
\[
{}_{-2}Y_{\ell m}(\theta,\varphi)=\sqrt{2\ell+1}\overline{D^{(\ell)}_{m,-2}(\varphi,\theta,0)}.
\]
The functions in the left hand side form an orthonormal basis in the space of the square integrable sections of the homogeneous vector bundle $\xi_{-2}$ with respect to the probabilistic $SO(3)$-invariant measure.

There exist different conventions. The first convention is used by Durrer (2008) among others. In this convention, a basis is formed with respect to the Lebesgue measure induced by the embedding $S^2\subset\mathbb{R}^3$ which is $4\pi$ times the probabilistic invariant measure and the sign of the second index of the Wigner $D$-function is changed (because we would like to expand $Q+\mathrm{i}U$ with respect to ${}_2Y_{\ell m}$):
\[
{}_{-2}Y_{\ell m}(\theta,\varphi)=\sqrt{\frac{2\ell+1}{4\pi}}\overline{D^{(\ell)}_{m,2}(\varphi,\theta,0)}.
\]

In the general case, for any $s\in\mathbb{Z}$, this convention reads (Durrer (2008), formula (A4.51))
\begin{equation}\label{eq:durrerharmonics}
{}_sY_{\ell m}(\theta,\varphi)=\sqrt{\frac{2\ell+1}{4\pi}}\overline{D^{(\ell)}_{m,-s}(\varphi,\theta,0)}.
\end{equation}
These functions are called \emph{spherical harmonics of spin~$s$} or the \emph{spin-weighted spherical harmonics}. They appeared in Gelfand and Shapiro (1952) under the name \emph{generalised spherical harmonics}. The current name goes back to Newman and Penrose (1966). Note that the spin-weighted spherical harmonics are defined for $\ell\geq|s|$ and $|m|\leq\ell$.

The second harmonic convention is used by Lin and Wandelt (2006), Newman and Penrose (1966), among others. It reads as
\[
{}_sY_{\ell m}(\theta,\varphi)=(-1)^m\sqrt{\frac{2\ell+1}{4\pi}}\overline{D^{(\ell)}_{m,-s}(\varphi,\theta,0)}.
\]

Both conventions are coherent with the following phase convention:
\begin{equation}\label{eq:phase}
\overline{{}_sY_{\ell m}}=(-1)^{m+s}{}_{-s}Y_{\ell\;-m}.
\end{equation}
In particular, for $s=0$ we return back to the convention $\overline{Y_{\ell m}}=(-1)^mY_{\ell\;-m}$ corresponding to reality condition \eqref{eq:ordinaryreality}.

To produce the harmonic convention coherent with the phase convention
\[
\overline{{}_sY_{\ell m}}=(-1)^s{}_{-s}Y_{\ell\;-m}
\]
corresponding to reality condition \eqref{eq:modifiedreality}, one must multiply the right hand side of the convention equation by $(-1)^{m^-}$. Thus, the modified first convention, used by Weinberg (2008) among others, is
\[
{}_sY_{\ell m}(\theta,\varphi)=(-1)^{m^-}\sqrt{\frac{2\ell+1}{4\pi}}\overline{D^{(\ell)}_{m,-s}(\varphi,\theta,0)},
\]
while the modified second convention, used by Geller and Marinucci (2008), among others, is
\[
{}_sY_{\ell m}(\theta,\varphi)=(-1)^{m^+}\sqrt{\frac{2\ell+1}{4\pi}}\overline{D^{(\ell)}_{m,-s}(\varphi,\theta,0)}.
\]

In what follows, we use the convention \eqref{eq:durrerharmonics}. The explicit expression for the spherical harmonics of spin~$s$ in the chart determined by spherical coordinates follows from \eqref{eq:durrerelements} and \eqref{eq:durrerharmonics}:
\begin{equation}\label{eq:explicit}
\begin{aligned}
{}_sY_{\ell m}(\theta,\varphi)&=(-1)^m\sqrt{\frac{(2\ell+1)(\ell+m)!(\ell-m)!}{4\pi(\ell+s)!(\ell-s)!}}
\sin^{2\ell}(\theta/2)e^{\mathrm{i}m\varphi}\\
&\quad\times\sum^{\min\{\ell+m,\ell-s\}}_{r=\max\{0,m-s\}}\binom{\ell-s}{r}
\binom{\ell+s}{r-m+s}(-1)^{\ell-r-s}\cot^{2r-m+s}(\theta/2).
\end{aligned}
\end{equation}
The decomposition of the complex polarisation takes the form
\[
(Q+\mathrm{i}U)(\mathbf{n})=\sum^{\infty}_{\ell=2}\sum^{\ell}_{m=-\ell}
a_{2,\ell m}\;{}_2Y_{\ell m}(\mathbf{n}),
\]
where
\[
a_{2,\ell m}=\int_{S^2}(Q+\mathrm{i}U)(\mathbf{n})
\overline{{}_2Y_{\ell m}(\mathbf{n})}\,\mathrm{d}\mathbf{n},
\]
while the decomposition of the conjugate complex polarisation is
\[
(Q-\mathrm{i}U)(\mathbf{n})=\sum^{\infty}_{\ell=2}\sum^{\ell}_{m=-\ell}
a_{-2,\ell m}\;{}_{-2}Y_{\ell m}(\mathbf{n}),
\]
where
\[
a_{-2,\ell m}=\int_{S^2}(Q-\mathrm{i}U)(\mathbf{n})
\overline{{}_{-2}Y_{\ell m}(\mathbf{n})}\,\mathrm{d}\mathbf{n}.
\]

In cosmological models, one assumes that $(Q+iU)(\mathbf{n})$ is a single realisation of the mean square continuous strict sense isotropic random field in the homogeneous vector bundle $\xi_2$. Isotropic random fields in vector bundles $\xi_s$, $s\in\mathbb{Z}$ were defined by Geller and Marinucci (2008). By Theorem~\ref{th:2}, we have
\begin{equation}\label{eq:polarisation}
(Q+\mathrm{i}U)(\mathbf{n})=\sum^{\infty}_{\ell=2}\sum^{\ell}_{m=-\ell}a_{2,\ell m}\;{}_2Y_{\ell m}(\mathbf{n}),
\end{equation}
where $\mathsf{E}[a_{2,\ell m}]=0$ and $\mathsf{E}[a_{2,\ell m}\overline{a_{2,\ell'm'}}]=\delta_{\ell\ell'}\delta_{mm'}C_{2\ell}$ with
\[
\sum^{\infty}_{\ell=2}(2\ell+1)C_{2\ell}<\infty.
\]

Different notations for the complex polarisation expansion coefficients $a_{\pm 2,\ell m}$ may be found in the literature. Some of them are shown in Table~\ref{tab2}.

\begin{table}
\centering
\begin{tabular}{|l|c|}
\hline \textbf{Source} & \textbf{Expansion coefficients} \\
\hline Durrer (2008) & $a^{(\pm 2)}_{\ell m}$ \\
\hline Lin and Wandelt (2006), & \\
Zaldarriaga and Seljak (1997) & $a_{\pm 2,\ell m}$ \\
\hline Weinberg (2008) & $a_{P,\ell m}$ \\
\hline
\end{tabular}
\caption{Examples of different notation for complex polarisation expansion coefficients.}\label{tab2}
\end{table}

In what follows we use the notation by Lin and Wandelt (2006). The expansion for the conjugate complex polarisation has the form
\begin{equation}\label{eq:conjugatepolarisation}
(Q-\mathrm{i}U)(\mathbf{n})=\sum^{\infty}_{\ell=2}\sum^{\ell}_{m=-\ell}a_{-2,\ell m}\;{}_{-2}Y_{\ell m}(\mathbf{n}).
\end{equation}

Since $Q(\mathbf{n})$ and $U(\mathbf{n})$ are real, the coefficients $a_{2,\ell m}$ and $a_{-2,\ell m}$ must satisfy the reality condition which depends on the phase convention. We agreed to use the first harmonic convention \eqref{eq:durrerharmonics}. Therefore, our phase convention is \eqref{eq:phase}, and the reality condition is
\begin{equation}\label{eq:spinreality}
\overline{a_{-2,lm}}=(-1)^ma_{2,l\;-m}.
\end{equation}

Along with the standard basis \eqref{eq:realbasis}, it is useful to use the so called \emph{helicity basis}. Again, there exist different names and conventions. Durrer (2008) defines the helicity basis as
\[
\mathbf{e}_+=\frac{1}{\sqrt{2}}(\mathbf{e}_{\theta}-\mathrm{i}\mathbf{e}_{\varphi}),\qquad
\mathbf{e}_-=\frac{1}{\sqrt{2}}(\mathbf{e}_{\theta}+\mathrm{i}\mathbf{e}_{\varphi}),
\]
while Weinberg (2008) uses the opposite definition
\[
\mathbf{e}_+=\frac{1}{\sqrt{2}}(\mathbf{e}_{\theta}+\mathrm{i}\mathbf{e}_{\varphi}),\qquad
\mathbf{e}_-=\frac{1}{\sqrt{2}}(\mathbf{e}_{\theta}-\mathrm{i}\mathbf{e}_{\varphi}).
\]
Challinor (2005) and Thorne (1980) use notation
\[
\mathbf{m}=\frac{1}{\sqrt{2}}(\mathbf{e}_{\theta}+\mathrm{i}\mathbf{e}_{\varphi}),\qquad
\mathbf{m}^*=\frac{1}{\sqrt{2}}(\mathbf{e}_{\theta}-\mathrm{i}\mathbf{e}_{\varphi}),
\]
while Challinor and Peiris (2009) use notation
\[
\mathbf{m}_+=\frac{1}{\sqrt{2}}(\mathbf{e}_{\theta}+\mathrm{i}\mathbf{e}_{\varphi}),\qquad
\mathbf{m}_-=\frac{1}{\sqrt{2}}(\mathbf{e}_{\theta}-\mathrm{i}\mathbf{e}_{\varphi})
\]
and call these the \emph{null basis}. We will use the definition and notation by Durrer (2008).

The helicity basis is useful by the following reason. Let $\eth$ be a covariant derivative in direction $-\sqrt{2}\mathbf{e}_-$:
\[
\eth=\nabla_{-\sqrt{2}\mathbf{e}_-}.
\]
Let $C^{\infty}(\xi_s)$ be the space of infinitely differentiable sections of the vector bundle $\xi_s$.
Durrer (2008) proves that for any ${}_sf\in C^{\infty}(\xi_s)$ we have
\[
\eth\;{}_sf=\left(s\cot\theta-\frac{\partial}{\partial\theta}-\frac{\mathrm{i}}{\sin\theta}
\frac{\partial}{\partial\varphi}\right){}_sf.
\]
In particular, put ${}_sf={}_sY_{\ell m}$. Using \eqref{eq:explicit}, we obtain
\[
\eth\;{}_sY_{\ell m}=\sqrt{(\ell-s)(\ell+s+1)}{}_{s+1}Y_{\ell m}.
\]
For $s\geq 0$ and $\ell=s$, the spherical harmonic ${}_{s+1}Y_{\ell m}$ is not defined and we use convention $\sqrt{(\ell-\ell)(2\ell+1)}{}_{\ell+1}Y_{\ell m}=0$. Then, $\eth\colon C^{\infty}(\xi_s)\to C^{\infty}(\xi_{s+1})$. Therefore, $\eth$ is called the \emph{spin raising operator}. Moreover, the last display shows that the restriction of $\eth$ onto the space $H^{(\ell)}$, $\ell>s$, is an intertwining  operator between equivalent representations $V_{\ell}$.

The adjoint operator, $\eth^*$, is a covariant derivative in direction $-\sqrt{2}\mathbf{e}_+$:
\[
\eth^*=\nabla_{-\sqrt{2}\mathbf{e}_+}.
\]
For any ${}_sf\in C^{\infty}(\xi_s)$ we have
\[
\eth^*{}_sf=\left(s\cot\theta-\frac{\partial}{\partial\theta}+\frac{\mathrm{i}}{\sin\theta}
\frac{\partial}{\partial\varphi}\right){}_sf.
\]
In particular,
\[
\eth^*{}_sY_{\ell m}=-\sqrt{(\ell+s)(\ell-s+1)}{}_{s-1}Y_{\ell m}.
\]
For $s\leq 0$ and $\ell=-s$, the spherical harmonic ${}_{s-1}Y_{\ell m}$ is not defined and we use convention $\sqrt{(\ell-\ell)(2\ell+1)}{}_{-\ell-1}Y_{\ell m}=0$. Then, $\eth^*\colon C^{\infty}(\xi_s)\to C^{\infty}(\xi_{s-1})$. Therefore, $\eth^*$ is called the \emph{spin lowering operator}. Moreover, the last display shows that the restriction of $\eth^*$ onto the space $H^{(\ell)}$, $\ell>-s$, is an intertwining  operator between equivalent representations $V_{\ell}$.

Zaldarriaga and Seljak (1997) introduced the following idea. Assume for a moment that
\begin{equation}\label{eq:assumption}
\sum^{\infty}_{\ell=2}\frac{(2\ell+1)(\ell+2)!}{(l-2)!}C_{2\ell}<\infty.
\end{equation}
Then, it is possible to act twice with $\eth$ on both hand sides of \eqref{eq:conjugatepolarisation} and to interchange differentiation and summation:
\[
\begin{aligned}
\eth^2(Q-\mathrm{i}U)(\mathbf{n})&=\eth^2\sum^{\infty}_{\ell=2}\sum^{\ell}_{m=-\ell}a_{-2,\ell m}\;{}_{-2}Y_{\ell m}(\mathbf{n})\\
&=\sum^{\infty}_{\ell=2}\sum^{\ell}_{m=-\ell}a_{-2,\ell m}\eth^2{}_{-2}Y_{\ell m}(\mathbf{n})\\
&=\sum^{\infty}_{\ell=2}\sum^{\ell}_{m=-\ell}\sqrt{\frac{(\ell+2)!}{(\ell-2)!}}a_{-2,\ell m}Y_{\ell m}(\mathbf{n}).
\end{aligned}
\]
By the same argumentation, we have
\[
(\eth^*)^2(Q+\mathrm{i}U)(\mathbf{n})=\sum^{\infty}_{\ell=2}\sum^{\ell}_{m=-\ell}\sqrt{\frac{(\ell+2)!}{(\ell-2)!}}a_{2,\ell m}Y_{\ell m}(\mathbf{n}).
\]
Unlike complex polarisation, the new random fields are rotationally invariant and no ambiguities connected with  rotations \eqref{eq:rotation} arise. However, they have complex behaviour under parity transformation, because $Q(\mathbf{n})$ and $U(\mathbf{n})$ behave differently (Lin and Wandelt (2006)): $Q$ has even parity: $Q(-\mathbf{n})=Q(\mathbf{n})$ while $U$ has odd parity: $U(-\mathbf{n})=-U(\mathbf{n})$.

Therefore, it is custom to group together quantities of the same parity:
\[
\begin{aligned}
\tilde{E}(\mathbf{n})&=-\frac{1}{2}((\eth^*)^2(Q+\mathrm{i}U)(\mathbf{n})+\eth^2(Q-\mathrm{i}U)(\mathbf{n})),\\
\tilde{B}(\mathbf{n})&=-\frac{1}{2\mathrm{i}}((\eth^*)^2(Q+\mathrm{i}U)(\mathbf{n})-\eth^2(Q-\mathrm{i}U)(\mathbf{n})).
\end{aligned}
\]
The random fields $\tilde{E}(\mathbf{n})$ and $\tilde{B}(\mathbf{n})$ are scalar (spin $0$), real-valued, and isotropic. To find their behaviour under parity transformation, follow Lin and Wandelt (2006). Notice that if $\mathbf{n}$ has spherical coordinates $(\theta,\varphi)$, then $-\mathbf{n}$ has spherical coordinates $\theta'=\pi-\theta$ and $\varphi'=\varphi+\pi$. Therefore,
\[
\frac{\partial}{\partial\theta'}=-\frac{\partial}{\partial\theta},\qquad
\frac{\partial}{\partial\varphi'}=\frac{\partial}{\partial\varphi}.
\]
Because $(Q+\mathrm{i}U)(-\mathbf{n})=(Q-\mathrm{i}U)(\mathbf{n})$, we obtain
\[
\begin{aligned}
(\eth^*)'(Q+\mathrm{i}U)(-\mathbf{n})&=\left(2\cot\theta'-\frac{\partial}{\partial\theta'}+\frac{\mathrm{i}}{\sin\theta'}
\frac{\partial}{\partial\varphi'}\right)(Q+\mathrm{i}U)(-\mathbf{n})\\
&=\left(-2\cot\theta+\frac{\partial}{\partial\theta}-\frac{\mathrm{i}}{\sin\theta}
\frac{\partial}{\partial\varphi}\right)(Q-\mathrm{i}U)(\mathbf{n})\\
&=-\eth(Q-\mathrm{i}U)(\mathbf{n})
\end{aligned}
\]
and
\[
\begin{aligned}
((\eth^*)')^2(Q+\mathrm{i}U)(-\mathbf{n})&=\left(2\cot\theta'-\frac{\partial}{\partial\theta'}+\frac{\mathrm{i}}{\sin\theta'}
\frac{\partial}{\partial\varphi'}\right)(-\eth(Q-\mathrm{i}U)(\mathbf{n}))\\
&=\eth^2(Q-\mathrm{i}U)(\mathbf{n}).
\end{aligned}
\]
Similarly, we have $(\eth')^2(Q+\mathrm{i}U)(-\mathbf{n})=(\eth^*)^2(Q-\mathrm{i}U)(\mathbf{n})$. Therefore,
\[
\begin{aligned}
\tilde{E}(-\mathbf{n})&=-\frac{1}{2}((\eth^*)^2(Q+\mathrm{i}U)(-\mathbf{n})+\eth^2(Q-\mathrm{i}U)(-\mathbf{n}))\\
&=-\frac{1}{2}((\eth^*)^2(Q-\mathrm{i}U)(\mathbf{n})+\eth^2(Q+\mathrm{i}U)(\mathbf{n}))\\
&=\tilde{E}(\mathbf{n})
\end{aligned}
\]
and
\[
\begin{aligned}
\tilde{B}(-\mathbf{n})&=-\frac{1}{2\mathrm{i}}((\eth^*)^2(Q+\mathrm{i}U)(-\mathbf{n})-\eth^2(Q-\mathrm{i}U)(-\mathbf{n}))\\
&=-\frac{1}{2\mathrm{i}}((\eth^*)^2(Q-\mathrm{i}U)(\mathbf{n})-\eth^2(Q+\mathrm{i}U)(\mathbf{n}))\\
&=-\tilde{B}(\mathbf{n}).
\end{aligned}
\]
It means that $\tilde{E}(\mathbf{n})$ has even parity like electric field, while $\tilde{B}(\mathbf{n})$ has odd parity like magnetic field.

The spectral representation of the fields $\tilde{E}(\mathbf{n})$ and $\tilde{B}(\mathbf{n})$ has the form
\[
\begin{aligned}
\tilde{E}(\mathbf{n})&=\sum^{\infty}_{\ell=2}\sum^{\ell}_{m=-\ell}a_{\tilde{E},\ell m}Y_{\ell m}(\mathbf{n}),\\
\tilde{B}(\mathbf{n})&=\sum^{\infty}_{\ell=2}\sum^{\ell}_{m=-\ell}a_{\tilde{B},\ell m}Y_{\ell m}(\mathbf{n}),
\end{aligned}
\]
where
\[
\begin{aligned}
a_{\tilde{E},\ell m}&=-\frac{1}{2}\sqrt{\frac{(\ell+2)!}{(\ell-2)!}}(a_{2,\ell m}+a_{-2,\ell m}),\\
a_{\tilde{B},\ell m}&=-\frac{1}{2i}\sqrt{\frac{(\ell+2)!}{(\ell-2)!}}(a_{2,\ell m}-a_{-2,\ell m}).
\end{aligned}
\]

It is convenient to introduce the fields $E(\mathbf{n})$ and $B(\mathbf{n})$ as
\begin{equation}\label{eq:EB}
\begin{aligned}
E(\mathbf{n})&=\sum^{\infty}_{\ell=2}\sum^{\ell}_{m=-\ell}a_{E,\ell m}Y_{\ell m}(\mathbf{n}),\\
B(\mathbf{n})&=\sum^{\infty}_{\ell=2}\sum^{\ell}_{m=-\ell}a_{B,\ell m}Y_{\ell m}(\mathbf{n}),
\end{aligned}
\end{equation}
with
\begin{equation}\label{eq:coefficients}
\begin{aligned}
a_{E,\ell m}&=-\frac{1}{2}(a_{2,\ell m}+a_{-2,\ell m}),\\
a_{B,\ell m}&=-\frac{1}{2\mathrm{i}}(a_{2,\ell m}-a_{-2,\ell m}).
\end{aligned}
\end{equation}
The random fields $E(\mathbf{n})$ and $B(\mathbf{n})$ are scalar (spin $0$), real-valued, and isotropic. Moreover, $E(\mathbf{n})$ has even parity, while $B(\mathbf{n})$ has odd parity. The advantage of $E(\mathbf{n})$ and $B(\mathbf{n})$ is that their definition does not use assumption \eqref{eq:assumption}. The expansion coefficients $a_{E,\ell m}$ are called \emph{electric multipoles}, while the expansion coefficients $a_{B,\ell m}$ are called \emph{magnetic multipoles}.

Different notations for the fields $E(\mathbf{n})$ and $B(\mathbf{n})$ and electric and magnetic multipoles may be found in the literature. Some of them are shown in Table~\ref{tab3}. In what follows, we use notation by Lin and Wandelt (2006).

\begin{table}
\centering
\begin{tabular}{|l|c|c|}
\hline \textbf{Source} & \textbf{Fields} & \textbf{Multipoles} \\
\hline Challinor (2005), & & \\
Challinor and Peiris (2009) & --- & $E_{\ell m}$, $B_{\ell m}$ \\
\hline Durrer (2008), & $\mathcal{E}(\mathbf{n})$, $\mathcal{B}(\mathbf{n})$ & $e_{\ell m}$, $b_{\ell m}$ \\
\hline Geller and Marinucci (2008) & $f_{\mathbf{E}}$, $f_{\mathbf{M}}$ & $A_{\ell m\mathbf{E}}$, $A_{\ell m\mathbf{M}}$ \\
\hline Lin and Wandelt (2006), & $E(\mathbf{n})$, $B(\mathbf{n})$ & $a_{E,\ell m}$, $a_{B,\ell m}$ \\
\hline Weinberg (2008), & & \\
Zaldarriaga and Seljak (1997) & --- & $a_{E,\ell m}$, $a_{B,\ell m}$ \\
\hline
\end{tabular}
\caption{Examples of different notation for the fields $E(\mathbf{n})$ and $B(\mathbf{n})$  and its expansion coefficients.}\label{tab3}
\end{table}

We prove the following theorem.

\begin{theorem}\label{th:4}
Let $T(\mathbf{n})$ be a real-valued random field defined by \eqref{eq:temperature}. Let $(Q\pm\mathrm{i}U)(\mathbf{n})$ be random fields defined by \eqref{eq:polarisation} and \eqref{eq:conjugatepolarisation}. Let $E(\mathbf{n})$ and $B(\mathbf{n})$ be random fields \eqref{eq:EB} whose expansion coefficients are determined by \eqref{eq:coefficients}. The following statements are equivalent.

\begin{enumerate}

\item $((Q-\mathrm{i}U)(\mathbf{n}),T(\mathbf{n}),(Q+\mathrm{i}U)(\mathbf{n}))$ is an isotropic random field in $\xi_{-2}\oplus\xi_0\oplus\xi_2$. The fields $Q(\mathbf{n})$ and $U(\mathbf{n})$ are real-valued.

\item $(T(\mathbf{n}),E(\mathbf{n}),B(\mathbf{n}))$ is an isotropic random field in $\xi_0\oplus\xi_0\oplus\xi_0$ with real-valued components. The components $T(\mathbf{n})$ and $B(\mathbf{n})$ are uncorrelated. The components $E(\mathbf{n})$ and $B(\mathbf{n})$ are uncorrelated.

\end{enumerate}
\end{theorem}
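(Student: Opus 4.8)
The plan is to prove both implications by passing to spectral coefficients and invoking the reducible spectral theorem, Theorem~\ref{th:3}. First I would unify notation, writing $a_{s,\ell m}$ for the coefficients of the three components, with $s=-2$ for $(Q-\mathrm{i}U)$, $s=0$ for $T$ (so $a_{0,\ell m}=a_{T,\ell m}$), and $s=2$ for $(Q+\mathrm{i}U)$. The realness of $Q$, $U$, $T$ is encoded in the uniform reality relations $a_{s,\ell,-m}=(-1)^m\overline{a_{-s,\ell m}}$, which follow from \eqref{eq:ordinaryreality}, \eqref{eq:spinreality} and the phase convention \eqref{eq:phase}. The field in statement~(1) lives in the homogeneous bundle attached to $W=W_{-2}\oplus W_0\oplus W_2$ with $W_s(g_\alpha)=e^{-\mathrm{i}s\alpha}$, so Theorem~\ref{th:3} applies verbatim: each $W_s$ is one-dimensional, the inner index $n$ is trivial, and the component index $k$ is just $s$. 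The whole proof is then a dictionary between the two Hermitian ``power matrices'' attached to the two descriptions, transported by a fixed linear isomorphism.

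Assume statement~(1). By Theorem~\ref{th:3}, $\mathsf{E}[a_{s,\ell m}]=0$ for $\ell\ge1$ and $\mathsf{E}[a_{s,\ell m}\overline{a_{s',\ell'm'}}]=\delta_{\ell\ell'}\delta_{mm'}R^{(\ell)}_{ss'}$ with $R^{(\ell)}$ an $m$-independent Hermitian positive semidefinite $3\times3$ matrix obeying \eqref{eq:convergent}. Evaluating this identity at $-m$ and substituting the reality relations yields the symmetry
\[
R^{(\ell)}_{ss'}=R^{(\ell)}_{-s',-s},\qquad\text{so}\qquad R^{(\ell)}_{0,2}=R^{(\ell)}_{-2,0}=\overline{R^{(\ell)}_{0,-2}},\quad R^{(\ell)}_{-2,-2}=R^{(\ell)}_{2,2}.
\]
I would then apply the change of variables $a_{E,\ell m}=-\tfrac12(a_{2,\ell m}+a_{-2,\ell m})$, $a_{B,\ell m}=-\tfrac1{2\mathrm{i}}(a_{2,\ell m}-a_{-2,\ell m})$ of \eqref{eq:coefficients}. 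A short computation from the reality relations gives $\overline{a_{E,\ell m}}=(-1)^m a_{E,\ell,-m}$ and $\overline{a_{B,\ell m}}=(-1)^m a_{B,\ell,-m}$, so $E$ and $B$ are real-valued. Since for each fixed $(\ell,m)$ the map $(a_{-2},a_0,a_2)\mapsto(a_T,a_E,a_B)$ is one fixed invertible linear map, the second moments of the new coefficients are again diagonal in $(\ell,m)$ with an $m$-independent Hermitian matrix $\widetilde R^{(\ell)}$ inheriting \eqref{eq:convergent}; the converse half of Theorem~\ref{th:3} then already gives that $(T,E,B)$ is a mean square continuous isotropic field in $\xi_0\oplus\xi_0\oplus\xi_0$ with real-valued components.

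The remaining point, which I expect to be the only genuinely delicate one, is the vanishing of the two cross-correlations. Transporting the second moments gives
\[
\widetilde R^{(\ell)}_{TE}=-\RE R^{(\ell)}_{0,2},\qquad \widetilde R^{(\ell)}_{TB}=\mathrm{Im}\,R^{(\ell)}_{0,2},\qquad \widetilde R^{(\ell)}_{EB}=-\tfrac12\,\mathrm{Im}\,R^{(\ell)}_{-2,2},
\]
so the $T$--$E$ spectrum stays free while ``$T\perp B$ and $E\perp B$'' is exactly the assertion that $R^{(\ell)}_{0,2}$ and $R^{(\ell)}_{-2,2}$ are \emph{real}, i.e. $R^{(\ell)}_{0,2}=R^{(\ell)}_{0,-2}$ and $R^{(\ell)}_{2,-2}=\overline{R^{(\ell)}_{2,-2}}$. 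These identities do not follow from $SO(3)$-invariance and the reality relations alone; they express invariance under the reflection $\mathbf n\mapsto-\mathbf n$, which fixes $\xi_0$ and interchanges $\xi_2$ with $\xi_{-2}$, because $(Q\pm\mathrm{i}U)(-\mathbf n)=(Q\mp\mathrm{i}U)(\mathbf n)$. This is where the real work lies: I would write out the action of the reflection on the coefficients $a_{s,\ell m}$, verify that reflection-invariance of the covariance forces the swap $R^{(\ell)}_{0,2}\leftrightarrow R^{(\ell)}_{0,-2}$ and $R^{(\ell)}_{-2,2}\leftrightarrow R^{(\ell)}_{2,-2}$, and thereby conclude that the odd-parity spectra $\widetilde R^{(\ell)}_{TB}$ and $\widetilde R^{(\ell)}_{EB}$ vanish — consistent with the even parity of $E$ and the odd parity of $B$ established earlier. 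The care needed here is in checking whether the conjugacy $(Q-\mathrm{i}U)=\overline{(Q+\mathrm{i}U)}$ present in statement~(1) already supplies this reflection, or whether it must be read into the word ``isotropic''.

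For the converse I would run the dictionary backwards. Starting from statement~(2), the real symmetric matrix $\widetilde R^{(\ell)}$ with $\widetilde R^{(\ell)}_{TB}=\widetilde R^{(\ell)}_{EB}=0$ is transported by the inverse map $a_{2,\ell m}=-(a_{E,\ell m}+\mathrm{i}a_{B,\ell m})$, $a_{-2,\ell m}=-(a_{E,\ell m}-\mathrm{i}a_{B,\ell m})$ into a Hermitian matrix $R^{(\ell)}$ which, precisely because the $TB$ and $EB$ entries vanish, satisfies both the symmetry $R^{(\ell)}_{ss'}=R^{(\ell)}_{-s',-s}$ and the summability \eqref{eq:convergent}; the reality relations for $a_{\pm2}$ (hence $Q,U$ real) follow from those for $a_E,a_B$. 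Theorem~\ref{th:3} then certifies that $((Q-\mathrm{i}U),T,(Q+\mathrm{i}U))$ is isotropic in $\xi_{-2}\oplus\xi_0\oplus\xi_2$, closing the equivalence. The mean-zero and summability conditions transfer in both directions automatically, since the change of variables is a fixed bounded invertible map independent of $\ell$ and $m$.
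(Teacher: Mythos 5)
Your overall route is the paper's: pass to the expansion coefficients, apply Theorem~\ref{th:3} to the bundle $\xi_{-2}\oplus\xi_0\oplus\xi_2$ (with $W=W_{-2}\oplus W_0\oplus W_2$, each component one-dimensional), and transport the resulting covariance matrices through the fixed linear change of variables \eqref{eq:coefficients}. Your dictionary is correct as far as it goes: the reality relations do give $R^{(\ell)}_{ss'}=R^{(\ell)}_{-s',-s}$, the formulas $\widetilde R^{(\ell)}_{TE}=-\RE R^{(\ell)}_{0,2}$, $\widetilde R^{(\ell)}_{TB}=\mathrm{Im}\,R^{(\ell)}_{0,2}$, $\widetilde R^{(\ell)}_{EB}=-\tfrac12\mathrm{Im}\,R^{(\ell)}_{-2,2}$ check out, and your converse direction is complete and coincides with the paper's.

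The gap is in the forward direction: the vanishing of $\widetilde R^{(\ell)}_{TB}$ and $\widetilde R^{(\ell)}_{EB}$ --- which is the entire content of the two ``uncorrelated'' clauses of statement (2) --- is not proved. You reduce it to the reality of $R^{(\ell)}_{0,2}$ and $R^{(\ell)}_{-2,2}$, observe that this does not follow from $SO(3)$-invariance together with the reality relations, and then outline a reflection argument you do not carry out, ending on the open question of whether statement (1) even supplies the required reflection invariance. A proof cannot end on that question; you must either execute the parity computation, using the pointwise identity $(Q\pm\mathrm{i}U)(-\mathbf{n})=(Q\mp\mathrm{i}U)(\mathbf{n})$ and the resulting behaviour $E(-\mathbf{n})=E(\mathbf{n})$, $B(-\mathbf{n})=-B(\mathbf{n})$, or conclude that the implication fails as stated. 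For comparison, the paper dispatches this step very briskly: for $T$--$B$ it writes $\mathsf{E}[a_{T,\ell m}\overline{a_{B,\ell'm'}}]$ as a multiple of $C_{T,2,\ell}-C_{T,-2,\ell}$ and declares it zero ``by \eqref{eq:spinreality}'', and for $E$--$B$ it invokes the parity identities of $E$ and $B$ together with isotropy of $(T,E,B)$. Note that \eqref{eq:spinreality} combined with \eqref{eq:ordinaryreality} and $m$-independence yields only $C_{T,-2,\ell}=\overline{C_{T,2,\ell}}$, i.e.\ exactly your $\widetilde R^{(\ell)}_{TB}=\mathrm{Im}\,C_{T,2,\ell}$; so your diagnosis of where the extra parity input must enter is sharper than the paper's one-line justification, but diagnosing the hard point is not the same as resolving it. As written, the forward implication in your proposal is asserted, not established.
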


\begin{proof}
Let $((Q-\mathrm{i}U)(\mathbf{n}),T(\mathbf{n}),(Q+\mathrm{i}U)(\mathbf{n}))$ be an isotropic random field in $\xi_{-2}\oplus\xi_0\oplus\xi_2$, and let $Q(\mathbf{n})$ and $U(\mathbf{n})$ be real-valued. By Theorem~\ref{th:3} and reality conditions \eqref{eq:ordinaryreality} and \eqref{eq:spinreality}, we have $\mathsf{E}[a_{T,\ell m}]=0$ for $\ell\neq 0$, $\mathsf{E}[a_{\pm 2,\ell m}]=0$ and
\begin{equation}\label{eq:polarisationisotropy}
\begin{aligned}
\mathsf{E}[a_{T,\ell m}\overline{a_{T,\ell'm'}}]&=\delta_{\ell\ell'}\delta_{mm'}C_{T,\ell},\\
\mathsf{E}[a_{\pm 2,\ell m}\overline{a_{\pm 2,\ell'm'}}]&=\delta_{\ell\ell'}\delta_{mm'}C_{2,\ell},\\
\mathsf{E}[a_{T,\ell m}\overline{a_{\pm 2,\ell'm'}}]&=\delta_{\ell\ell'}\delta_{mm'}C_{T,\pm 2,\ell},\\
\mathsf{E}[a_{-2,\ell m}\overline{a_{2,\ell'm'}}]&=\delta_{\ell\ell'}\delta_{mm'}C_{-2,2,\ell},
\end{aligned}
\end{equation}
with
\begin{equation}\label{eq:polarisationconvergence}
\sum^{\infty}_{\ell=0}(2\ell+1)C_{T,\ell}+2\sum^{\infty}_{\ell=2}(2\ell+1)C_{2,\ell}<\infty.
\end{equation}
Note that the second equation in \eqref{eq:polarisationisotropy} were proved for the general spin $s$ by Geller and Marinucci (2008) in their Theorem~7.2.

It is enough to prove that $\mathsf{E}[a_{E,\ell m}]=\mathsf{E}[a_{B,\ell m}]=0$ and
\begin{equation}\label{eq:isotropy}
\mathsf{E}[a_{X,\ell m}\overline{a_{Y,\ell'm'}}]=\delta_{\ell\ell'}\delta_{mm'}C_{XY,\ell}
\end{equation}
with
\begin{equation}\label{eq:convergence}
\sum_{\ell}(2\ell+1)C_{X,\ell}<\infty
\end{equation}
for all $X$, $Y\in\{T,E,B\}$. Then, the second statement of the theorem follows from Theorem~\ref{th:3}.

The first condition trivially follows from \eqref{eq:coefficients}. Condition \eqref{eq:isotropy} with $X=Y=T$ is obvious. We prove condition \eqref{eq:isotropy} with $X=Y=E$. Indeed, by \eqref{eq:coefficients} and \eqref{eq:spinreality},
\[
\begin{aligned}
\mathsf{E}[a_{E,\ell m}\overline{a_{E,\ell'm'}}]&=\frac{1}{4}(\mathsf{E}[(a_{2,\ell m}+a_{-2,\ell m})(\overline{a_{2,\ell'm'}}+\overline{a_{-2,\ell'm'}})])\\
&=\frac{1}{4}(\mathsf{E}[a_{2,\ell m}\overline{a_{2,\ell'm'}}]+\mathsf{E}[a_{2,\ell m}\overline{a_{-2,\ell'm'}}]\\
&\quad+\mathsf{E}[a_{-2,\ell m}\overline{a_{2,\ell'm'}}]+\mathsf{E}[a_{-2,\ell m}\overline{a_{-2,\ell'm'}}])\\
&=\frac{1}{2}\delta_{\ell\ell'}\delta_{mm'}(C_{2,\ell}+\RE C_{-2,2,\ell}).
\end{aligned}
\]
Condition \eqref{eq:isotropy} with $X=Y=B$ can be proved similarly.

Next, we prove condition \eqref{eq:isotropy} with $X=T$ and $Y=B$. Indeed,
\[
\begin{aligned}
\mathsf{E}[a_{T,\ell m}\overline{a_{B,\ell'm'}}]&=\frac{1}{2i}(\mathsf{E}[a_{T,\ell m}\overline{a_{2,\ell'm'}}]-\mathsf{E}[a_{T,\ell m}\overline{a_{-2,\ell'm'}}])\\
&=-\frac{1}{2}\delta_{\ell\ell'}\delta_{mm'}(C_{T,2,\ell}-C_{T,-2,\ell})\\
&=0
\end{aligned}
\]
by \eqref{eq:spinreality}, which also proves that $T(\mathbf{n})$ and $B(\mathbf{n})$ are uncorrelated. Condition~\eqref{eq:isotropy} for other cross-correlations can be proved similarly.

Next, we prove \eqref{eq:convergence} with $X=E$. Indeed,
\[
\sum_{\ell=2}^{\infty}(2\ell+1)C_{E,\ell}=\frac{1}{2}\sum_{\ell=2}^{\infty}(2\ell+1)
(C_{2,\ell}+\RE C_{-2,2,\ell})<\infty.
\]
Condition~\eqref{eq:convergence} for $X=B$ can be proved similarly.

Next, we prove that $E(\mathbf{n})$ is real-valued. It is enough to prove reality condition $a_{E,\ell\;-m}=(-1)^m\overline{a_{E,\ell m}}$. We have
\[
\begin{aligned}
a_{E,\ell\;-m}&=-\frac{1}{2}(a_{2,\ell\;-m}+a_{-2,\ell\;-m})\\
&=-\frac{1}{2}[(-1)^m\overline{a_{-2,\ell m}}+(-1)^{-m}\overline{a_{2,\ell m}}]\\
&=(-1)^m\overline{a_{E,\ell m}}.
\end{aligned}
\]
$B(\textbf{n})$ is real-valued by similar reasons.

Finally, we prove that $E(\mathbf{n})$ and $B(\mathbf{n})$ are uncorrelated. Indeed, $\mathsf{E}[E(\mathbf{n}_1)B(\mathbf{n}_2)]=C_{EB}(\mathbf{n}_1\boldsymbol{\cdot}\mathbf{n}_2)$, because $(T(\mathbf{n}),E(\mathbf{n}),B(\mathbf{n}))$ is an isotropic random field in $\xi_0\oplus\xi_0\oplus\xi_0$. So, $C_{EB}((-\mathbf{n}_1)\boldsymbol{\cdot}(-\mathbf{n}_2))=C_{EB}(\mathbf{n}_1\boldsymbol{\cdot}\mathbf{n}_2)$. On the other hand,
\[
\begin{aligned}
C_{EB}((-\mathbf{n}_1)\boldsymbol{\cdot}(-\mathbf{n}_2))&=\mathsf{E}[E(-\mathbf{n}_1)B(-\mathbf{n}_2)]\\
&=\mathsf{E}[E(\mathbf{n}_1)(-B(-\mathbf{n}_2))]\\
&=-C_{EB}(\mathbf{n}_1\boldsymbol{\cdot}\mathbf{n}_2),
\end{aligned}
\]
because $E(-\mathbf{n}_1)=E(\mathbf{n}_1)$ and $B(-\mathbf{n}_1)=-B(\mathbf{n}_1)$. Therefore, $C_{EB}(\mathbf{n}_1\boldsymbol{\cdot}\mathbf{n}_2)=0$.

Conversely, let $(T(\mathbf{n}),E(\mathbf{n}),B(\mathbf{n}))$ be an isotropic random field in $\xi_0\oplus\xi_0\oplus\xi_0$ with real-valued components, let the components $T(\mathbf{n})$ and $B(\mathbf{n})$ be uncorrelated, and let the components $E(\mathbf{n})$ and $B(\mathbf{n})$ be also uncorrelated. Solving system of equations \eqref{eq:coefficients}, we obtain
\begin{equation}\label{eq:inverse}
\begin{aligned}
a_{2,\ell m}&=-a_{E,\ell m}+a_{B,\ell m}\mathrm{i},\\
a_{-2,\ell m}&=-a_{E,\ell m}-a_{B,\ell m}\mathrm{i}.
\end{aligned}
\end{equation}
It is obvious that $\mathsf{E}[a_{\pm 2,\ell m}]=0$. We have to prove \eqref{eq:polarisationisotropy}, \eqref{eq:polarisationconvergence}, and \eqref{eq:spinreality}. The first equation in \eqref{eq:polarisationisotropy} is obvious. The second equation is proved as follows.
\[
\begin{aligned}
\mathsf{E}[a_{2,\ell m}\overline{a_{2,\ell'm'}}]&=\mathsf{E}[(-a_{E,\ell m}+a_{B,\ell m}\mathrm{i})(-a_{E,\ell'm'}-a_{B,\ell'm'}\mathrm{i})]\\
&=\delta_{\ell\ell'}\delta_{mm'}(C_{E,\ell}+C_{B,\ell}),
\end{aligned}
\]
because $E(\mathbf{n})$ and $B(\mathbf{n})$ are uncorrelated. Proof for negative coefficients is similar.

The third equation in \eqref{eq:polarisationisotropy} is proved as follows.
\[
\begin{aligned}
\mathsf{E}[a_{T,\ell m}\overline{a_{2,\ell'm'}}]&=\mathsf{E}[a_{T,\ell m}(-a_{E,\ell'm'}-a_{B,\ell'm'}\mathrm{i})]\\
&=-\delta_{\ell\ell'}\delta_{mm'}C_{TE,\ell},
\end{aligned}
\]
because $T(\mathbf{n})$ and $B(\mathbf{n})$ are uncorrelated. Proof for negative coefficient is similar.

The fourth equation in \eqref{eq:polarisationisotropy} is proved as follows.
\[
\begin{aligned}
\mathsf{E}[a_{-2,\ell m}\overline{a_{2,\ell'm'}}]&=\mathsf{E}[(-a_{E,\ell m}-a_{B,\ell m}\mathrm{i})(-a_{E,\ell'm'}-a_{B,\ell m}\mathrm{i})]\\
&=\delta_{\ell\ell'}\delta_{mm'}(C_{E,\ell}-C_{B,\ell}),
\end{aligned}
\]
because $E(\mathbf{n})$ and $B(\mathbf{n})$ are uncorrelated.

Because $C_{2,\ell}=C_{E,\ell}+C_{B,\ell}$, we have
\[
\begin{aligned}
\sum^{\infty}_{\ell=0}(2\ell+1)C_{T,\ell}+2\sum^{\infty}_{\ell=2}(2\ell+1)C_{2,\ell}
&=\sum^{\infty}_{\ell=0}(2\ell+1)C_{T,\ell}+2\sum^{\infty}_{\ell=2}(2\ell+1)(C_{E,\ell}+C_{B,\ell})\\
&<\infty
\end{aligned}
\]
which proves \eqref{eq:polarisationconvergence}. The reality condition \eqref{eq:spinreality} is proved as
\[
\begin{aligned}
\overline{a_{-2,\ell m}}&=\overline{-a_{E,\ell m}-a_{B,\ell m}\mathrm{i}}\\
&=-\overline{a_{E,\ell m}}+\overline{a_{B,\ell m}}\mathrm{i}\\
&=-(-1)^ma_{E,\ell\;-m}+(-1)^ma_{B,\ell\;-m}\mathrm{i}\\
&=(-1)^ma_{2,\ell\;-m}.
\end{aligned}
\]
\end{proof}

In the so called Gaussian cosmological theories, the random field $(T(\mathbf{n}),E(\mathbf{n}),B(\mathbf{n}))$ is supposed to be \emph{Gaussian} and isotropic with real-valued components. Let $\eta_{\ell 0j}$, $\ell\geq 0$, $1\leq j\leq 3$ and $\eta_{\ell mj}$, $\ell\geq 1$, $1\leq m\leq\ell$, $1\leq j\leq 6$ be independent standard normal random variables. Put
\[
\zeta_{\ell mj}=
\begin{cases}
\eta_{\ell 0j},&m=0,\\
\frac{1}{\sqrt{2}}(\eta_{\ell m\;2j-1}+\eta_{\ell m\;2j}\mathrm{i}),&m>0,
\end{cases}
\]
where $\ell\geq 0$, $0\leq m\leq\ell$, and $1\leq j\leq 3$. Now put
\[
\begin{aligned}
a_{T,\ell m}&=(C_{T,\ell})^{1/2}\zeta_{\ell m1},\\
a_{E,\ell m}&=\frac{C_{TE,\ell}}{(C_{T,\ell})^{1/2}}\zeta_{\ell m1}+\left(C_{E,\ell}-\frac{(C_{TE,\ell})^2}{C_{T,\ell}}\right)^{1/2}\zeta_{\ell m2},\\
a_{B,\ell m}&=(C_{B,\ell m})^{1/2}\zeta_{\ell m3},
\end{aligned}
\]
for $m\geq 0$ and $a_{X,\ell\;-m}=(-1)^m\overline{a_{X,\ell m}}$ for $m<0$ and $X\in\{T,E,B\}$. The random fields
\[
\begin{aligned}
T(\mathbf{n})&=\sum^{\infty}_{\ell=0}\sum^{\ell}_{m=-\ell}a_{T,\ell m}Y_{\ell m}(\mathbf{n}),\\
E(\mathbf{n})&=\sum^{\infty}_{\ell=2}\sum^{\ell}_{m=-\ell}a_{E,\ell m}Y_{\ell m}(\mathbf{n}),\\
B(\mathbf{n})&=\sum^{\infty}_{\ell=2}\sum^{\ell}_{m=-\ell}a_{B,\ell m}Y_{\ell m}(\mathbf{n})
\end{aligned}
\]
satisfy all conditions of the second statement of Theorem~\ref{th:4}. The random fields $(Q\pm\mathrm{i}U)(\mathbf{n})$ can be reconstructed by \eqref{eq:inverse}, \eqref{eq:polarisation}, and \eqref{eq:conjugatepolarisation}. By Theorem~\ref{th:4}, $((Q-\mathrm{i}U)(\mathbf{n}),T(\mathbf{n}),(Q+\mathrm{i}U)(\mathbf{n}))$ is an isotropic Gaussian random field in $\xi_{-2}\oplus\xi_0\oplus\xi_2$. The fields $Q(\mathbf{n})$ and $U(\mathbf{n})$ are real-valued.

Finally, we note that Kamionkowski et al (1997) proposed a different formalism for computations of the polarisation field on the whole sky. Instead of spin-weighted harmonics ${}_sY_{\ell m}$, they use \emph{tensor harmonics} $Y^E_{\ell m}$ and $Y^B_{\ell m}$ which are related to the spin-weighted harmonics as follows.
\[
\begin{aligned}
Y^E_{\ell m}&=\frac{1}{\sqrt{2}}({}_{-2}Y_{\ell m}\mathbf{e}_-\otimes\mathbf{e}_-+{}_2Y_{\ell m}\mathbf{e}_+\otimes\mathbf{e}_+),\\
Y^B_{\ell m}&=\frac{1}{\mathrm{i}\sqrt{2}}({}_{-2}Y_{\ell m}\mathbf{e}_-\otimes\mathbf{e}_--{}_2Y_{\ell m}\mathbf{e}_+\otimes\mathbf{e}_+).
\end{aligned}
\]
This formalism is also used by Cabella and Kamionkowski (2005), Challinor (2004, 2009), Challinor and Peiris (2009) among others. An excellent survey of different types of spherical harmonics may be found in Thorne (1980).

\end{document}